\newcommand{\ZZ}{\mathbb{Z}}
\newcommand{\QQ}{\mathbb{Q}}
\newcommand{\RR}{\mathbb{R}}
\newcommand{\PP}{\mathbb{P}}
\newcommand{\GG}{\mathbb{G}}
\newcommand{\F}{\mathcal{F}}
\renewcommand{\O}{\mathcal{O}}
\renewcommand{\AA}{\mathbb{A}}
\newcommand{\Q}{\mathfrak{Q}}
\newcommand{\C}{\mathfrak{C}}
\newcommand{\X}{\mathcal{X}}
\newcommand{\Zpos}{\mathbb{Z}_{\geq 0}}
\renewcommand{\phi}{\varphi}
\renewcommand{\epsilon}{\varepsilon}
\newcommand{\PGL}{\mathrm{PGL}}
\renewcommand{\S}{\mathcal{S}} 
\newcommand{\Quot}{\mathrm{Quot}}
\newcommand{\QuotF}{\mathfrak{Q}uot}
\DeclareMathOperator{\Spec}{Spec}
\newcommand{\Gr}{\mathrm{Gr}}
\newcommand{\Fl}{\mathrm{Fl}}
\newcommand{\Hilb}{\mathrm{Hilb}}
\newcommand{\Hom}{\mathrm{Hom}}
\newcommand{\Sch}{\mathrm{Sch}}
\newcommand{\Sets}{\mathrm{Sets}}
\newcommand{\id}{\mathrm{id}}
\renewcommand{\tilde}[1]{\widetilde{#1}}
\newcommand{\set}[2]{\left\{ #1 \, \middle| \, #2 \right\}}
\renewcommand{\vec}[1]{\mathbf{#1}}
\newcommand{\vl}[1]{\multicolumn{1}{|c}{#1}}
\theoremstyle{plain}
\newtheorem{theorem}{Theorem}
\newtheorem{lemma}[theorem]{Lemma}
\newtheorem{corollary}[theorem]{Corollary}
\newtheorem{proposition}[theorem]{Proposition}
\theoremstyle{definition}
\newtheorem{definition}[theorem]{Definition}
\theoremstyle{remark}
\newtheorem{remark}[theorem]{Remark}
\newtheorem{example}[theorem]{Example}
\title{Murphy's law on a fixed locus of the Quot scheme}
\author{Reinier F. Schmiermann}
\date{}
\begin{document}

\maketitle

\begin{abstract}
Let $T := \mathbb{G}_m^d$ be the torus acting on the Quot scheme of points $\coprod_n \mathrm{Quot}_{\mathcal{O}^r/\mathbb{A}^d/\mathbb{Z}}^n$ via the standard action on $\mathbb{A}^d$. We analyze the fixed locus of the Quot scheme under this action. In particular we show that for $d \leq 2$ or $r \leq 2$, this locus is smooth, and that for $d \geq 4$ and $r \geq 3$ it satisfies Murphy's law as introduced by Vakil, meaning that it has arbitrarily bad singularities. These results are obtained by giving a decomposition of the fixed locus into connected components, and identifying the components with incidence schemes of subspaces of $\mathbb{P}^{r-1}$. We then obtain a characterization of the incidence schemes which occur, in terms of their graphs of incidence relations.
\end{abstract}

\section{Introduction}
Understanding the singularities of the Quot scheme of points $\Quot_{\O^r/\AA^d/\ZZ}^n$ is a topic of ongoing interest. While for $d = 1$ this moduli space is smooth, already starting from $d = 2$ it exhibits singularities. It is known that in general the Quot scheme has multiple irreducible components, some of which are even generically nonreduced \cite{jelisiejew_sivic2022components}. For an overview of some open problems in this area, see \cite{jelisiejew2023openproblems}.

A new perspective on studying singularities of moduli spaces was given by the introduction of Murphy's law in algebraic geometry by Vakil \cite{vakil2006murphy}.
\begin{definition}
Consider the equivalence relation $\sim$ on pointed schemes which is generated by setting $(X, p) \sim (Y, q)$ if there is a smooth morphism $X \to Y$ which sends $p$ to $q$. An equivalence class of this relation is called a \emph{singularity type}. Given a scheme $X$, we say that $X$ satisfies \emph{Murphy's law} if every singularity type that appears on a scheme of finite type over $\ZZ$, also appears on $X$.
\end{definition}
Vakil already proved that a large number of common moduli spaces satisfy Murphy's law. However, the Quot scheme of points was not one of them. In an influential paper by Jelisiejew \cite{jelisiejew2020pathologies}, he proved that the Hilbert scheme of points on $\AA_\ZZ^{16}$ satisfies Murphy's law up to retraction. It seems unlikely that 16 is the smallest dimension for which this behavior occurs, so this raises the question what the minimal dimension $d$ is such that the Hilbert scheme on $\AA^d$ satisfies Murphy's law. More generally, we might ask for what values of $d$ and $r$ the Quot scheme $\coprod_n \Quot_{\O^r/\AA^d/\ZZ}^n$ satisfies Murphy's law (possibly up to retraction).

As a first step towards answering this question, we might consider some torus action on the Quot scheme, and start by analyzing the fixed locus of this action. In particular, there is a natural $\GG_m^d \times \GG_m^r$ action on $\Quot_{\O^r/\AA^d/\ZZ}^n$, where the action of $\GG_m^d$ comes from the standard action on $\AA^d$, and the subtorus $\GG_m^r$ acts on the framing bundle $\O^r$ by scaling the summands. It is well-known that the fixed locus under the action of this large torus consists of a finite number of reduced points, but by looking at several interesting subtori, we can get fixed loci with a more interesting structure. For example, Erman \cite{erman2012murphy} considered the action of the one dimensional torus $\GG_m$ on $\Hilb_{\AA^d}^n$ acting as the diagonal of $\GG_m^d$, and showed that the corresponding fixed locus satisfies Murphy's law for $d \geq 5$. This result plays an important role in the proof of Jelisiejew's result on the full Hilbert scheme. We also mention work by Bifet \cite{bifet1989pointfixes}, who studied the fixed locus of $\Quot_{\O^r/\AA^d/k}^n$ under the action of $\GG_m^r$, and Payne \cite{payne2008moduli}, who showed Murphy's law on certain moduli of toric vector bundles on toric varieties.

Another motivation for studying various fixed loci of the Quot scheme comes from enumerative geometry. When defining and computing virtual invariants on the Quot scheme on $\AA^d$, often torus localization is used to express the invariants in terms of a more manageable fixed locus. This has been done for $\AA^2$ \cite{bojkohuang2023invariants} (See also \cite{oprea2022quot, arbesfeld2021Ktheory, stark2022cosection}), $\AA^3$ \cite{fasola2021higher} and $\AA^4$ \cite{nekrasov2019magnificent, KRmagnificent}.

In this paper, we study the fixed locus of $\Quot_{\O^r/\AA^d/\ZZ}^n$ under the action of the torus $T := \GG_m^d$. Our main result is as follows:
\begin{theorem}
Let $d$ and $r$ be nonnegative integers. Then the fixed locus
\[
\coprod_{n \geq 0} \left(\Quot_{\O^r/\AA^d/\ZZ}^n\right)^T
\]
is smooth if and only if $d \leq 2$ or $r \leq 2$. Furthermore, if $d \geq 4$ and $r \geq 3$, then this scheme satisfies Murphy's law.
\end{theorem}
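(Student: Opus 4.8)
The plan is to reduce the entire statement to combinatorics and linear algebra over the poset $\Zpos^d$, following the three-step strategy announced in the abstract. First I would determine the $T$-fixed points scheme-theoretically. Since $T=\GG_m^d$ acts on $\AA^d=\Spec \ZZ[x_1,\dots,x_d]$ by scaling the coordinates, a family of $T$-fixed quotients is the same datum as a $\ZZ^d$-graded quotient of $\O^r=\ZZ[x_1,\dots,x_d]^{\oplus r}$. Decomposing $\O^r=\bigoplus_{\vec a\in\Zpos^d}\O^{\oplus r}\!\cdot x^{\vec a}$ into weight spaces, a graded submodule is exactly a collection of subbundles $M_{\vec a}\subseteq \O^{\oplus r}$ (over a field $k$, subspaces $M_{\vec a}\subseteq k^r$) satisfying the inclusions $M_{\vec a}\subseteq M_{\vec a+e_i}$ for every $i$ and every $\vec a$, with $M_{\vec a}$ full for all but finitely many $\vec a$, and with colength $n=\sum_{\vec a}\mathrm{codim}\,M_{\vec a}$. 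The function $\vec a\mapsto \dim M_{\vec a}$ is locally constant, so it cuts the fixed locus into open-and-closed pieces; fixing it, the corresponding piece is precisely the \emph{incidence scheme} of subspaces of $\PP^{r-1}$ of the prescribed dimensions subject to this finite inclusion poset. This establishes the decomposition into connected components and reduces every assertion of the theorem to a statement about these incidence schemes and the finite labelled posets that index them.

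For the smoothness dichotomy I would argue entirely through the graph of incidence relations attached to each labelled poset, which is the combinatorial invariant controlling the local structure; the content is to show that this graph is of \emph{smooth type} exactly when $d\le 2$ or $r\le 2$. When $r\le 2$ every $M_{\vec a}$ is $0$, a line, or all of $k^2$, so the only incidences available are too coarse to impose any rank condition, and an inductive peeling of the poset presents each incidence scheme as smooth. When $d\le 2$ the index poset is a chain ($d\le1$) or a planar grid ($d=2$); the key point, which I would isolate combinatorially, is that a two-dimensional grid is too thin to force the determinantal conditions that create singularities, so again every resulting graph is of smooth type. For the converse I would locate the first \emph{bad} graph, appearing already at $d=3$, $r=3$: a three-dimensional corner of the poset can force a rank condition among subspaces of $k^3$ that is witnessed by no subspace present in the poset, yielding a reducible or non-reduced incidence scheme and hence a singular point. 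Embedding this labelled subposet into $\Zpos^d$ and the ambient $k^3$ into $k^r$ shows, by the monotonicity of the construction, that a singular component survives for all $d\ge 3$ and $r\ge 3$, completing the ``only if'' direction.

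For Murphy's law in the range $d\ge 4$, $r\ge 3$ I would exhibit the realization spaces of point-line configurations as incidence schemes of the above type, and invoke universality. Since $r\ge 3$ we have $\PP^2\subseteq\PP^{r-1}$, and a point of $\PP^2$ (a line in $k^3$) lying on a line of $\PP^2$ (a plane in $k^3$) is encoded by a single inclusion $M_{\vec a}\subseteq M_{\vec b}$ of subspaces of dimensions $1$ and $2$. The universality theorem of Mn\"ev, in the form used by Vakil \cite{vakil2006murphy} and Erman \cite{erman2012murphy}, asserts that such point-line realization spaces carry every singularity type of a scheme of finite type over $\ZZ$, i.e. they already satisfy Murphy's law. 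It therefore suffices to realize the incidence graph of an arbitrary such configuration as the graph of a labelled subposet of $\Zpos^d$ with $d=4$: one must assign lattice positions to the point-subspaces and line-subspaces so that the coordinatewise order induces exactly the prescribed point-on-line incidences, introduces no spurious inclusions, and does not force distinct subspaces to collapse along chains. Monotonicity in $d$ and $r$ then propagates the conclusion to all $d\ge4$, $r\ge3$.

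I expect the crux to be this last combinatorial realizability together with the exact location of the threshold. Pinning down which incidence graphs arise from labelled subposets of $\Zpos^d$ --- and proving that four coordinate directions suffice to encode every point-line incidence pattern while three do not --- is what separates the singular range $d\ge 3$ from the Murphy range $d\ge 4$, and is the most delicate part of the argument; controlling the inclusions \emph{forced} by the product order, so that the realization space is cut out with its correct universal scheme structure rather than a rigidified one, is where the real work lies. A secondary point to verify is that the universality input is applied in its strong, realization-space form, so that the conclusion is Murphy's law on the nose rather than merely up to retraction.
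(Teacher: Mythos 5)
Your overall architecture is the same as the paper's: decompose the fixed locus by the dimension function $\vec a\mapsto\dim M_{\vec a}$ (the paper's characteristic function), identify each piece with an incidence scheme of subspaces of $\PP^{r-1}$, and feed Mn\"ev--Sturmfels universality into the $d\ge 4$ case. But the two steps that carry the actual mathematical weight are exactly the ones you leave open, and in both cases the missing idea is the same: a clean characterization of \emph{which} labelled posets (equivalently, which incidence structures) arise from $\Zpos^d$. The paper proves (\cref{theorem:char_chi}) that, up to equivalence of incidence structures, these are precisely the $(r-1)$-partite intersection graphs of nonempty connected compact subsets of $\RR^{d-1}$, obtained by projecting the level sets of $\chi$ along the diagonal $(1,\dots,1)$. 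This one statement resolves both of your acknowledged difficulties at once. For $d\ge 4$ it reduces your ``crux'' --- realizing an arbitrary point-line incidence graph inside $\Zpos^4$ without spurious inclusions --- to the elementary fact that every finite graph is an intersection graph of connected sets in $\RR^3$ (place the vertices in general position and take the star of half-segments toward the neighbours); the reverse direction of \cref{theorem:char_chi}, which converts such a family of sets back into a characteristic function via a grid discretization and a staircase construction, is precisely the control of ``forced inclusions'' you worry about, and it does require a genuine argument (the paper's properties 1--5 of the sets $B_p$). Without it your Murphy's law claim is unproven. You also need the extra step of \cref{cor:mnevsturmfels}: embedding $\PP^2\subseteq\PP^{r-1}$ for $r>3$ does not directly give the rank-$2$ incidence scheme, because the points and lines would be free to move in all of $\PP^{r-1}$; one must add a common ambient plane to the incidence structure and fiber over $\Gr(3,r)$.

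The second gap is your $d\le 2$ smoothness argument. ``A two-dimensional grid is too thin to force the determinantal conditions'' is a heuristic, not a proof, and it is not obviously true at the level of generality you state it: singular incidence schemes already have very small incidence graphs ($K_{2,2}$, realized at $d=3$), so one must show that such graphs genuinely cannot occur for $d=2$ and then prove smoothness of the ones that do. The paper does this by observing (again via \cref{theorem:char_chi}) that the $d=2$ incidence structures are exactly the $(r-1)$-partite \emph{interval graphs}, and then inducting: pick the interval with minimal right endpoint; all of its neighbours pairwise intersect, so after passing to an equivalent incidence structure the forgetful map to the smaller incidence scheme is a pullback of $\Fl(d_1,d_2,d_3,r)\to\Fl(d_1,d_3,r)$, which is a Grassmannian bundle (\cref{lemma:flag_proj_smooth}). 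Your $r\le 2$ case and your $d=3$ singular example are essentially correct as sketched (the paper's \cref{ex:dim3sing} is the $K_{2,2}$ configuration, locally $\Spec\ZZ[x,y]/(xy)\times\AA^3$), but as it stands the proposal establishes neither the smoothness half nor the universality half of the theorem.
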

Note that we work over $\Spec \ZZ$ here, the main reason for this being that this is the setting in which Murphy's law is defined. It should however be noted that by base changing, the smoothness result holds over any field. Furthermore, note that if $X$ is any $d$-dimensional toric variety which contains a copy of $\AA^d$ (in particular if $X = \PP^d$), then $\Quot_{\O^r/X/\ZZ}^n$ contains a copy of $\Quot_{\O^r/\AA^d/\ZZ}^n$ as an open subscheme, so Murphy's law also holds for the Quot scheme on $X$ if $d \geq 4$ and $r \geq 3$.

For $d = 2$, note that the theorem tells us that the fixed locus $(\Quot_{\O^r/\AA^2/\ZZ}^n)^T$ is smooth for all $r$ and $n$. This may be unexpected, since $\Quot_{\O^r/\AA^2/\ZZ}^n$ itself is singular for $r \geq 2$ and $n \geq 2$ \cite[Remark 4.2]{oprea2022quot}.

The first case in which we show Murhpy's law, is the case $d = 4, r = 3$. Note that, somewhat surprisingly, it is still possible to define virtual enumerative invariants on the Quot scheme on $\AA^4$, as mentioned before. This suggests that even moduli spaces that satisfy Murphy's law can carry additional interesting structure which can be studied. A proof for Murphy's law in just the case $d = 4, r = 3$ is given in the author's Master's thesis \cite{schmiermann2021murphy}, on which this paper is based. We also note that this case can be used to show that the moduli space of Pandharipande-Thomas stable pairs on a toric Calabi-Yau fourfold has arbitrarily bad singularities, as described by Liu \cite[Example 2.2.5]{liu2023PT}.

The cases with $d = 3$ and $r \geq 3$ are the only cases where we do not know whether Murphy's law holds, even though we do know that the fixed locus is singular (See \cref{ex:dim3sing}). It would be interesting to know more about the singularities which occur in this case.

\subsection{Proof overview}
To prove the theorem, we start by giving a decomposition of the fixed locus into connected components, based on what we will call \emph{characteristic functions}. This decomposition is similar to the decompositions of moduli spaces of torus equivariant sheaves given in \cite{kool2011fixed} or \cite{payne2008moduli}. Next we show that the components in this decomposition are isomorphic to incidence schemes of subspaces of $\PP^{r-1}$. These incidence schemes parametrize configurations of subspaces (such as points, lines, planes, etc.) in $\PP^{r-1}$ satisfying certain incidence relations (such as ``line $i$ is contained in plane $j$'').

Using this identification of components of the fixed locus with incidence schemes, the question of which incidence schemes occur as components becomes a mostly combinatorial problem. We show that the relevant incidence schemes are exactly those that correspond to certain intersection graphs of connected subsets in $\RR^{d-1}$. When $d = 2$ this means we get incidence schemes corresponding to interval graphs, which we show to be smooth. In the case $d = 3$, the relevant incidence schemes correspond to string graphs. Finally, in the case $d = 4$, we show that all possible incidence structures occur. This essentially follows from the fact that all graphs can be embedded in $\RR^3$. Now we can apply Mn\"ev-Sturmfels universality \cite{mnev2006universality,sturmfels1987decidability} (see also \cite{lafforgue2003chirurgie,lee2013mnevsturmfels} for more modern scheme-theoretic formulations), which states that the disjoint union of all incidence schemes of points and lines in $\PP^2$ satisfies Murphy's law.

In \cref{sec:quot}, we recall the definition of the Quot scheme and the torus action which we use. In \cref{subsec:decomp}, the decomposition of the fixed locus is described. \Cref{sec:incidence} starts with a definition of incidence schemes, after which it is shown that every component of the decomposition is isomorphic to such an incidence scheme. The characterization of the incidence schemes occurring in the decomposition is given in \cref{sec:char}. Finally, in \cref{sec:appl} the main result is proven. This happens by separately considering the cases with small rank (\cref{subsec:rank12}), dimension 1 or 2 (\cref{subsec:dim12}), dimension 3 (\cref{subsec:dim3}) and finally dimension 4 and higher (\cref{subsec:dim4}).

\subsection{Acknowledgments}
I would like to thank Joachim Jelisiejew, Henry Liu and Michele Graffeo for useful discussions. Furthermore, I would like to thank my supervisor Martijn Kool for his help and guidance. This work was supported by NWO grant VI.Vidi.192.012.

\subsection{Notation and conventions}
For a noetherian scheme $S$, we denote by $\Sch_S$ the category of locally noetherian schemes over $S$. If $S = \Spec R$, we will also denote this category by $\Sch_R$. For $T, X \in \Sch_S$, we denote $X(T) := \Hom(T, X)$ for the set of $T$-points on $X$, and $X_T = X \times_S T$ for the base-change of $X$ to $T$. If furthermore $\F$ is a sheaf on $X$, the we write $\F_T$ for the pull-back of $\F$ under the projection $X_T \to X$, and for $f \colon X \to Y$ a morphism in $\Sch_S$, we write by $f_T \colon X_T \to Y_T$ the base-changed version of the map.

In case no base-scheme is specified, we work over $\Spec \ZZ$, in particular we write $\AA^d := \AA_\ZZ^d$ and $\PP^d := \PP_\ZZ^d$ for any nonnegative integer $d$.

Any graphs occurring in this paper will be assumed to be finite.

\section{Quot scheme and torus action} \label{sec:quot}

\subsection{Quot scheme}
We start by recalling the definition of the Quot scheme. The following is based on \cite[Chapter 5]{FGA_explained}, where also a proof of representability is given.

\begin{definition}
Let $S$ be a noetherian scheme, let $X$ be a quasi-projective scheme over $S$, and let $E$ be a coherent sheaf on $X$. Denote by $\Sch_S$ the category of locally noetherian schemes over $S$. For $T \in \Sch_S$, a \emph{family of quotients of $E$ parametrized by $T$} is a pair $(\F, q)$ consisting of
\begin{itemize}
\item a coherent sheaf $\F$ on $X_T = X \times_S T$ such that the schematic support of $\F$ is proper over $T$ and $\F$ is flat over $T$, and
\item a surjective homomorphism of $\O_{X_T}$-modules $q \colon E_T \to \F$, where $E_T$ is the pull-back of $E$ under the projection $X_T \to X$.
\end{itemize}
Two such families $(\F, q)$ and $(\F', q')$ are considered equivalent if $\ker(q) = \ker(q')$. 

The \emph{Quot-functor}
\[
\QuotF_{E/X/S} \colon \Sch_S^{op} \to \Sets
\]
sends any $T \in \Sch_S$ to the set of equivalence classes of families of quotients of $E$ parametrized by $T$. For a morphism $f \colon T \to T'$ in $\Sch_S$, the morphism
\[
\QuotF_{E/X/S}(f) \colon \QuotF_{E/X/S}(T') \to \QuotF_{E/X/S}(T)
\]
is given by pull-back along $f$.

Let $L$ be a line bundle on $X$ which is relatively very ample over $S$, then $\QuotF(f)$ can be written as a disjoint union
\[
\QuotF_{E/X/S} = \coprod_{\Phi \in \QQ[\lambda]} \QuotF_{E/X/S}^{\Phi, L},
\]
where the functor $\QuotF_{E/X/S}^{\Phi, L}$ maps the scheme $T$ to classes of families $(\F, q)$ such that for every $t \in T$, the Hilbert polynomial of $\F_t$ is $\Phi$.

For any $\Phi \in \QQ[\lambda]$, the functor $\QuotF_{E/X/S}^{\Phi, L}$ is representable by the \emph{Quot scheme} $\Quot_{E/X/S}^{\Phi, L}$, which is a quasi-projective scheme over $S$.
\end{definition}

In this paper, we are only interested in Quot schemes on $\AA^d$ of 0-dimensional quotients of $\O_{\AA^d}^r$, which will be denoted by $Q_{r, d}^n := \Quot_{\O_{\AA^d}^r/\AA^d/\Spec \ZZ}^{n, L}$ for nonnegative integers $d$, $r$ and $n$. Note that this scheme does not depend on the choice of a line bundle $L$, as reflected by the notation. We denote the corresponding moduli functor by $\Q_{r, d}^n$, and use the notation $Q_{r,d}^\bullet := \coprod_{n \geq 0} Q_{r,d}^n$ and $\Q_{r,d}^\bullet := \coprod_{n \geq 0} \Q_{r,d}^n$.

\subsection{Torus action}
The Quot scheme $Q_{r,d}^n$ carries an action of a $d$-dimensional torus $T = \GG_m^d$, induced by the standard action on $\AA^d$, which we will describe next.

Identify $\AA^n = \Spec \ZZ[x_1, \ldots, x_d]$ and $T = \Spec \ZZ[t_1,\ldots, t_d, t_1^{-1}, \ldots, t_d^{-1}]$. Now the action on $\AA^d$ is given by a map $a \colon T \times \AA^d \to \AA^d$, coming from the map
\[
\ZZ[x_1, \ldots, x_d] \to \ZZ[t_1,\ldots, t_d, t_1^{-1}, \ldots, t_d^{-1}] \otimes_\ZZ \ZZ[x_1, \ldots, x_d],
\]
given by $x_i \mapsto t_ix_i$ for $1 \leq i \leq d$.

Let $S \in \Sch_\ZZ$, and let $\vec{t} \in T(S)$. This $\vec{t}$ induces an isomorphism $a_{\vec{t}} \colon \AA_S^d \to \AA_S^d$, which gives a functor $a_{\vec{t} *} = a_{\vec{t}^{-1}}^*$ on the category of coherent sheaves on $\AA_S^d$. Using this functor, we can send any quotient $\O_{\AA_S^d}^r \xrightarrow{q} \F$ in $\Q_{r,d}^n(S)$ to another quotient
\[
\O_{\AA_S^d}^r \cong a_{\vec{t} *} \O_{\AA_S^d}^r \xrightarrow{a_{\vec{t} *} q} a_{\vec{t} *} \F.
\]
Varying the base $S$ and the point $\vec{t}$, this yields a natural transformation $T \times \Q_{r,d}^n \to \Q_{r,d}^n$. It can be verified that this gives a $T$-action on $\Q_{r,d}^n$ and $Q_{r,d}^n$.

Now that we have defined a group action, we can also consider the fixed point locus $(Q_{r,d}^n)^T$, or the corresponding functor $(\Q_{r,d}^n)^T$, see \cite{fogarty1973} for how these can be defined. For a scheme $S$, the set $(\Q_{r,d}^n)^T(S)$ contains exactly those points $(\F, q) \in \Q_{r,d}^n(S)$ such that for every scheme $S'$ over $S$ we have that $(\F_{S'}, q_{S'})$ is fixed under the action of $T(S')$.

\begin{definition}
Let $X$ be a scheme over a base scheme $S \in \Sch_\ZZ$. Let $a \colon G \times_S X$ be an action of a group scheme $G$ on $X$, and let $\F$ be a sheaf on $X$. Then a \emph{$G$-equivariant structure} on $\F$ is an isomorphism $\alpha \colon a^* \F \to \pi_2^* \F$ of sheaves on $G \times_S X$, with $\pi_2 \colon G \times X \to X$ the projection, such that the following diagram of sheaves on $G\times G\times X$ commutes:
\[
\begin{tikzcd}
(\id_G \times a)^*a^*\F \ar[r,"\sim"] \ar[d,"(\id_G\times a)^*\alpha"] & (m \times \id_X)^*a^*\F \ar[d,"(m\times\id_X)^*\alpha"]\\
(\id_G \times a)^*\pi_2^*\F \ar[r,"\pi_{23}^*\alpha"] & \pi_3^*\F
\end{tikzcd}
\]
where $\pi_2 \colon G \times X \to X$, $\pi_{23} \colon G\times G\times X \to G \times X$ and $\pi_3 \colon G \times G \times X \to X$ are projections.
\end{definition}

\begin{lemma} \label{lemma:fixed_to_equivariant}
For $S \in \Sch_\ZZ$ and $(\F,q) \in \Q_{r,d}^n(S)$, we have that $(\F,q) \in (\Q_{r,d}^n)^T(S)$ if and only if $\F$ can be given a $T$-equivariant structure such that $q\colon \O_{\AA_S^d}^r \to \F$ is map of $T$-equivariant sheaves. Furthermore, if $\F$ has such a $T$-equivariant structure, then this structure is unique.
\end{lemma}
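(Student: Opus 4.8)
The plan is to reduce everything to a single rigidity property of framed quotients: since $q \colon \O_{\AA_S^d}^r \to \F$ is surjective, the only automorphism $\phi$ of $\F$ with $\phi \circ q = q$ is the identity. Consequently, whenever $q' \colon \O_{\AA_S^d}^r \to \F'$ is another surjection with $\ker q' = \ker q$, there is a \emph{unique} isomorphism $\phi \colon \F \to \F'$ with $\phi \circ q = q'$. This uniqueness is the engine behind existence, the cocycle condition, and the final uniqueness clause all at once, so I would establish it first and then invoke it repeatedly.

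Next I would set up the dictionary between an isomorphism $\alpha \colon a^*\F \to \pi_2^*\F$ on $T \times_S \AA_S^d$ and a functorial family of isomorphisms indexed by $T$-points. For $S' \in \Sch_S$ and $\vec t \in T(S')$, pulling $\alpha$ back along $(\vec t, \id)$ gives an isomorphism $a_{\vec t}^*\F_{S'} \to \F_{S'}$, equivalently (using $a_{\vec t *} = a_{\vec t^{-1}}^*$) an isomorphism $\F_{S'} \xrightarrow{\sim} a_{\vec t *}\F_{S'}$; conversely $\alpha$ itself is recovered from the universal $T$-point, namely from the base $S' = T \times_\ZZ S$ together with the tautological $\tau \in T(S')$ given by projection to $T$, under the canonical identification $\AA_{S'}^d \cong T \times_S \AA_S^d$ carrying $a_\tau$ to the action map $a$. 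Under this dictionary, requiring that $q$ be a map of equivariant sheaves (with $\O_{\AA_S^d}^r$ carrying its canonical equivariant structure) is precisely the statement that each $\alpha_{\vec t}$ identifies the quotient $(\F_{S'}, q_{S'})$ with $(a_{\vec t *}\F_{S'}, a_{\vec t *}q_{S'})$.

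With this in hand the two implications are short. For ($\Leftarrow$), an $\alpha$ making $q$ equivariant shows that for every $S'$ and every $\vec t \in T(S')$ the quotients $q_{S'}$ and $a_{\vec t *}q_{S'}$ share a kernel, so $(\F_{S'}, q_{S'})$ is fixed and hence $(\F, q) \in (\Q_{r,d}^n)^T(S)$. For ($\Rightarrow$), I would apply the definition of the fixed locus to the universal point $\tau$: the equality $\ker(a_{\tau *}q_{S'}) = \ker(q_{S'})$ yields, by the rigidity above, a unique quotient-compatible isomorphism, which under the dictionary is exactly an $\alpha \colon a^*\F \to \pi_2^*\F$ making $q$ equivariant.

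Finally, I would verify that this $\alpha$ is an honest equivariant structure and that it is unique, both by rigidity. The two composites in the cocycle square are isomorphisms between the same pair of sheaves on $T \times_S T \times_S \AA_S^d$, each compatible with the thrice pulled-back (still surjective) quotient map, hence equal by uniqueness; similarly, any two equivariant structures making $q$ equivariant restrict at every $T$-point to the unique quotient-compatible isomorphism, so they agree after pullback along every $(\vec t, \id)$ and in particular along $\tau$, giving equality. I expect the only real friction to be bookkeeping: keeping straight the base-change identifications and the conventions $a_{\vec t *} = a_{\vec t^{-1}}^*$ and $\pi_2^*\F = \F_{S'}$ while setting up the dictionary. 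Once these are pinned down, surjectivity of $q$ does all the substantive work.
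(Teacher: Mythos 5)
Your proposal is correct and follows essentially the same route as the paper: both use the tautological $T$-point of $T_S$ to convert the fixed-point condition into an equality of kernels, produce $\alpha$ from the surjectivity of $q$, and verify the backward direction by pulling back along arbitrary $T$-points. Your explicit rigidity lemma (uniqueness of the quotient-compatible isomorphism) is just a cleanly isolated version of the argument the paper uses implicitly to get the cocycle condition and uniqueness.
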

\begin{proof}
First, assume that $(\F, q) \in (\Q_{r,d}^n)^T(S)$. The identity map $\id_{T_S} \colon T_S \to T_S$ gives a point $\vec{t} \in T_S(T_S)$. It can be verified that this $\vec{t}$ acts on $\AA_{T_S}^d \cong T_S \times_S \AA_S^d$ using the composition
\[
a_{\vec{t}} \colon T_S \times_S \AA_S^d \xrightarrow{\Delta \times \id_{\AA_S^d}} T_S \times_S T_S \times_S \AA_S^d \xrightarrow{\id_{T_S} \times a_S} T_S \times_S \AA_S^d.
\]
In particular, we have that $\pi_2 \circ a_{\vec{t}} = a_S$, where $\pi_2 \colon T_S\times_S \AA_S^d \to \AA_S^d$ is the projection and $a_S \colon T_S \times_S \AA_S^d \to \AA_S^d$ is the base-changed version of $a \colon T \times \AA^d \to \AA^d$.

Note that the map $\Q_{r,d}^n(S) \to \Q_{r,d}^n(T_S)$ sends $(\F, q)$ to $(\pi_2^* \F, \pi_2^* q)$. The action of $\vec{t}^{-1}$ sends this point to $(a_{\vec{t}}^* \pi_2^* \F, a_{\vec{t}}^* \pi_2^* q) = (a_S^* \F, a_S^* q)$. Applying the fact that $(\F, q)$ is a $T$-fixed point, we get that $(\pi_2^* \F, \pi_2^* q)$ and $(a_S^* \F, a_S^* q)$ represent the same element of $\Q_{r,d}^n(T_S)$, so we get $\ker(\pi_2^* q) = \ker(a_S^* q)$. From this we get an isomorphism $\alpha \colon a_S^*\F \to \pi_2^*\F$ which fits in the following commutative diagram:
\[
\begin{tikzcd}
\ker(a_S^* q) \ar[r,"\sim"]\ar[hook,d] & \ker(\pi_2^* q)\ar[hook,d]\\
a_S^* \O_{\AA_S^d}^r \ar[r,"\sim"]\ar[two heads,d,"a_S^* q"]& \pi_2^* \O_{\AA_S^d}^r\ar[two heads,d,"\pi_2^* q"]\\
a_S^*\F \ar[r,"\alpha"]& \pi_2^*\F\\
\end{tikzcd}
\]
Here the first two rows come from the equivariant structure on $\O_{\AA_S^d}^r$. Using the fact that $q$ is a surjection which sends the equivariant structure on $\O_{\AA_S^d}^r$ to $\alpha$, it follows that also $\alpha$ describes an equivariant structure, and furthermore that this $\alpha$ is unique if we require this compatibility.

Now assume that $\F$ has an equivariant structure $\alpha$ such that $q$ is a morphism of $T$-equivariant sheaves. Now let $S' \to S$ be an $S$-scheme, take $\vec{t} \in T(S')$, and let $i \colon S' \to T_{S'}$ be the morphism of $S$-schemes corresponding to $\vec{t}^{-1}$. We get the following commuting diagram of sheaves on $\AA_{S'}^d$:
\[
\begin{tikzcd}
(i\times\id)^*a_{S'}^*\O_{\AA_{S'}^d}^r \cong a_{\vec{t}^{-1}}^*\O_{\AA_{S'}^d}^r \ar[r,"\sim"]\ar[two heads,d,"a_{\vec{t}^{-1}}^* q_{S'}"] & (i\times\id)^*\pi_2^*\O_{\AA_{S'}^d}^r \cong \O_{\AA_{S'}^d}^r \ar[two heads,d,"q_{S'}"]\\
(i\times\id)^*a_{S'}^*\F_{S'} \cong a_{\vec{t}^{-1}}^*\F_{S'} \ar[r,"\sim"] & (i\times\id)^*\pi_2^*\F_{S'} \cong \F_{S'}\\
\end{tikzcd}
\]
From this it becomes clear that the family $(\F_{S'}, q) \in \Q_{r,d}^n(S')$ is fixed under the action of $\vec{t}$. Since this holds for all $S'$ and all $t \in T(S')$, it follows that $(\F, q) \in (\Q_{r,d}^n)^T$.
\end{proof}

\subsection{Decomposing the fixed locus} \label{subsec:decomp}

We know by \cite[Lemma 0EKL]{stacks-project} that over an affine scheme $\Spec R$, $T$-equivariant quasi-coherent sheaves on $\AA_{\Spec R}^d$ correspond to $\ZZ^d$-graded $R[x_1,\ldots,x_d]$-modules. In particular, the points in $(\Q_{r,d}^n)^T(\Spec R)$ correspond exactly to graded quotient modules of the form
\[
q \colon R[x_1, \ldots, x_d]^r \to F = \bigoplus_{\vec{a} \in \ZZ^d} F_\vec{a}
\]
that are furthermore flat over $R$. Here we take the convention that $\prod_{i = 1}^d x_i^{a_i}$ has degree $(a_i)_{i=1}^d \in \Zpos^d$.

In order to describe the scheme $(Q_{r,d}^n)^T$ in more detail, we introduce characteristic functions. An example of this definition is visualized in \cref{fig:chi}.

\begin{figure}
\centering
\begin{subfigure}[t]{0.68\textwidth}
\centering
\begin{tabular}{|ccccc}
$\vdots$                      & $\vdots$                           & $\vdots$                           & $\vdots$ &  $\iddots$\\
0                                  & 0                                  & 0                                  & 0 & $\cdots$\\
$(k^3/\langle e_1,e_2\rangle)y^2$ & 0                                  & 0                                  & 0 & $\cdots$\\
$(k^3/\langle e_1\rangle)y$   &  $(k^3/\langle e_1,e_3\rangle)xy$ & 0                                  & 0 & $\cdots$\\
$k^3$                         &  $(k^3/\langle e_3\rangle)x$  & $(k^3/\langle e_3\rangle)x^2$  & 0 & $\cdots$\\\hline
\end{tabular}
\caption{The vector spaces $F_\vec{a}$} \label{fig:chi_Fa}
\end{subfigure}
\begin{subfigure}[t]{0.3\textwidth}
\centering
\begin{tabular}{|ccccc}
$\vdots$ & $\vdots$ & $\vdots$ & $\vdots$ & $\iddots$\\
0 & 0 & 0 & 0 & $\cdots$\\
1 & 0 & 0 & 0 & $\cdots$\\
2 & 1 & 0 & 0 & $\cdots$\\
3 & 2 & 2 & 0 & $\cdots$\\\hline
\end{tabular}
\caption{The corresponding $\chi_\F$} \label{fig:chi_chi} 
\end{subfigure}
\caption{A visualization of the characteristic function $\chi_\F \colon \Zpos^2 \to \ZZ^3$ of the quotient $\O_{\AA_k^2}^3 \to \F$ on $\AA_k^2$, corresponding to $F = (k[x,y])^3/((x,y)^3 + (e_3x, e_1y, e_2y^2))$.}
\label{fig:chi}
\end{figure}

\begin{definition}
Let $k$ be a field and let $F$ be a graded quotient module which corresponds to some point $(\F, q)$ in $(\Q_{r,d}^n)^T(\Spec k)$. We write $F = \bigoplus_{\vec{a} \in \Zpos^d} F_\vec{a}$. Each of the components $F_\vec{a}$ is a finite-dimensional $k$-vector space, so we can define the \emph{characteristic function} of $\F$ to be $\chi_\F\colon \Zpos^d \to \ZZ$ with $\chi_\F(\vec{a}) = \dim_k F_{\vec{a}}$ for all $\vec{a} \in \Zpos^d$.

For general $S \in \Sch_\ZZ$ and $(\F, q) \in (\Q_{r,d}^n)^T(S)$, we can define for every point $s \in S$ the characteristic function $\chi_{\F_s}$ by considering the fiber $(\F_s, q_s) \in (\Q_{r,d}^n)^T(\Spec k(s))$.
\end{definition}

We define $\X_{r,d}^n$ to be the set of all possible characteristic functions $\chi_{\F_s}$ for any scheme $S$ and any point $s \in S$. Furthermore, for $\vec{a} = (a_i)_{i=0}^d$ and $\vec{b} = (b_i)_{i=0}^d$ in $\Zpos^d$ we say that $\vec{a} \leq \vec{b}$ if and only if $a_i \leq b_i$ for all $i$.

\begin{lemma}\label{lemma:X}
A function $\chi \colon \Zpos^d \to \ZZ$ is contained in $\X_{r,d}^n$ if and only if the following properties are satisfied:
\begin{enumerate}
\item For all $\vec{a} \in \Zpos^d$, we have $0 \leq \chi(\vec{a}) \leq r$,
\item $\sum_{\vec{a} \in \Zpos^d} \chi(\vec{a}) = n$,
\item For all $\vec{a}, \vec{b} \in \Zpos^d$ with $\vec{a} \leq \vec{b}$, we have $\chi(\vec{a}) \geq \chi(\vec{b})$.
\end{enumerate}
\end{lemma}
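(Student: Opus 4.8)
The plan is to reduce everything to the combinatorics of $\ZZ^d$-graded quotients of $k[x_1,\ldots,x_d]^r$ over a field $k$, using the dictionary recalled before the definition (Stacks 0EKL) together with \cref{lemma:fixed_to_equivariant}. Since by definition each element of $\X_{r,d}^n$ arises as $\chi_{\F_s}$ for the fiber $(\F_s,q_s) \in (\Q_{r,d}^n)^T(\Spec k(s))$, I may work over a field $k$ throughout, where a point of $(\Q_{r,d}^n)^T(\Spec k)$ is the same datum as a graded surjection $q\colon R^r \to F$ with $R = k[x_1,\ldots,x_d]$ and $\dim_k F = n$. Writing $(R^r)_{\vec{a}} = k^r \cdot x^{\vec{a}}$ for the degree-$\vec{a}$ piece, the graded submodule $K = \ker q$ is precisely a family of subspaces $K_{\vec{a}} \subseteq k^r$ (under the identification $(R^r)_{\vec{a}} \cong k^r$) which is nondecreasing for the partial order, i.e. $\vec{a} \leq \vec{b} \implies K_{\vec{a}} \subseteq K_{\vec{b}}$, since multiplication by $x^{\vec{b}-\vec{a}}$ is the identity on $k^r$ under these identifications. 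Then $\chi(\vec{a}) = \dim_k F_{\vec{a}} = r - \dim_k K_{\vec{a}}$.

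For necessity, properties (1) and (2) are immediate: $\dim_k F_{\vec{a}} \ge 0$ always, $\dim_k F_{\vec{a}} \le \dim_k (R^r)_{\vec{a}} = r$ because $q_{\vec{a}}$ is surjective, and $\sum_{\vec{a}} \chi(\vec{a}) = \dim_k F = n$ by the length condition. For property (3), given $\vec{a} \le \vec{b}$ I would use the commutative square expressing $R$-linearity of $q$ in degrees $\vec{a}$ and $\vec{b}$: multiplication by $x^{\vec{b}-\vec{a}}$ on $R^r$ is an isomorphism $(R^r)_{\vec{a}} \to (R^r)_{\vec{b}}$, so composing with the surjection $q_{\vec{b}}$ shows that $(\text{mult. } x^{\vec{b}-\vec{a}})\circ q_{\vec{a}}\colon (R^r)_{\vec{a}} \to F_{\vec{b}}$ is surjective; since $q_{\vec{a}}$ is onto $F_{\vec{a}}$, multiplication $x^{\vec{b}-\vec{a}}\colon F_{\vec{a}} \to F_{\vec{b}}$ is surjective, giving $\chi(\vec{b}) \le \chi(\vec{a})$.

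For sufficiency, given $\chi$ satisfying (1)--(3) I would realize it by an explicit quotient over any field $k$. Fix a complete flag $0 = W_0 \subset W_1 \subset \cdots \subset W_r = k^r$ with $\dim_k W_j = j$, which exists because (1) guarantees $0 \le r-\chi(\vec{a}) \le r$, and set $K_{\vec{a}} := W_{r - \chi(\vec{a})}$. Property (3) makes $r - \chi(\vec{a})$ nondecreasing in $\vec{a}$, and since the $W_j$ form a chain the family $(K_{\vec{a}})$ is automatically nested, hence defines a graded submodule $K \subseteq R^r$; the quotient $F := R^r/K$ then satisfies $\dim_k F_{\vec{a}} = \chi(\vec{a})$. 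Property (2) gives $\dim_k F = n < \infty$, so $F$ is supported in finitely many degrees, is $0$-dimensional with constant Hilbert polynomial $n$ and finite (hence proper) support over $\Spec k$, and is flat over the field $k$; being graded it carries a $T$-equivariant structure, so via the dictionary and \cref{lemma:fixed_to_equivariant} the surjection $R^r \to F$ is a point of $(\Q_{r,d}^n)^T(\Spec k)$ with characteristic function $\chi$.

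The only genuinely content-bearing step is the surjectivity of multiplication by $x^{\vec{b}-\vec{a}}$ in the necessity direction, with the rest being bookkeeping; on the constructive side the key observation is that placing all the kernels $K_{\vec{a}}$ along a single flag trivializes the nesting constraint, so that property (3) by itself produces a valid submodule, with no genericity or field-size hypotheses required. I therefore expect no serious obstacle: the statement is essentially a translation of ``graded quotient of $\O^r$'' into ``monotone, $r$-bounded dimension function of total mass $n$.''
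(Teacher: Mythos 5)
Your proposal is correct and follows the same route as the paper: necessity via the fact that $\ker q$ is a graded submodule (so the degreewise kernels are nested along the partial order, equivalently multiplication $F_{\vec{a}}\to F_{\vec{b}}$ is surjective), and sufficiency via the same construction of placing all kernels along a single complete flag $W_{r-\chi(\vec{a})}$. Your write-up just spells out the monotonicity and well-definedness checks that the paper leaves as "follows immediately."
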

\begin{proof}
First suppose that $\chi \in \X_{r,d}^n$, so there is some field $k$ and some $\ZZ^d$-graded quotient $q \colon k[x_1, \ldots, x_d]^r \to F$ with
\[
\chi(\vec{a}) = \dim_k F_{\vec{a}} = r - \dim_k (\ker q)_\vec{a}
\]
for all $\vec{a} \in \ZZ^d$. Note that $\ker q$ will always be a graded torsion-free submodule of $k[x_1,\ldots, x_d]^r$. Properties 1 and 3 follow immediately from this. Property 2 follows from the fact that the sheaf $\F$ should have constant Hilbert polynomial $n$.

For the other direction, fix some field $k$ and some sequence of vector spaces
\[
0 = V_0 \subseteq V_1 \subseteq \cdots \subseteq V_r = k^r
\]
such that $\dim_k V_i = i$ for $0 \leq i \leq r$. We consider the graded submodule
\[
E = \bigoplus_{\vec{a}\in\Zpos^d} E_{\vec{a}} \subseteq k[x_1, \ldots, x_d]^r
\]
with $E_\vec{a} = V_{r-\chi(\vec{a})} \cdot x^{\vec{a}}$ for all $\vec{a} \in \ZZ^d$. Now the quotient $k[x_1, \ldots, x_d] \to k[x_1, \ldots, x_d]/E$ is well-defined and corresponds to some element of $(\Q_{r,d}^n)^T(\Spec k)$ which furthermore has characteristic function $\chi$.
\end{proof}

\begin{lemma}
Let $S \in \Sch_{\Spec \ZZ}$ and let $(\F, q) \in (\Q_{r,d}^n)^T(S)$. Then the characteristic function $\chi_{\F_s} \in \X_{r,d}^n$ is locally constant over $S$.
\end{lemma}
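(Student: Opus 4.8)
The plan is to realize each fiberwise weight space $(F_s)_{\vec{a}}$ as the fiber of a locally free sheaf on $S$, whose rank is then automatically locally constant. Since local constancy is a local condition on $S$, I would first reduce to the case $S = \Spec R$ with $R$ Noetherian. The projection $\pi \colon \AA_S^d \to S$ is affine, so $\F$ is equivalent to the data of an $R[x_1,\ldots,x_d]$-module $M$, and $\pi_*\F$ is just $M$ viewed as an $R$-module. Because $(\F,q) \in (\Q_{r,d}^n)^T(S)$, the support of $\F$ is finite over $S$ (the quotients are $0$-dimensional) and $\F$ is flat over $S$; hence $M$ is a finite flat, and therefore finite locally free, $R$-module of rank $n$.

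Next I would use the equivariant structure to grade $M$. By \cref{lemma:fixed_to_equivariant} the sheaf $\F$ carries a unique $T$-equivariant structure, and by \cite[Lemma 0EKL]{stacks-project} this is equivalent to a $\ZZ^d$-grading $M = \bigoplus_{\vec{a} \in \ZZ^d} M_{\vec{a}}$ by $R$-submodules, compatible with the one used to define $\chi$. Two observations make this decomposition manageable: since $M$ is finitely generated over the Noetherian ring $R$, only finitely many $M_{\vec{a}}$ are nonzero; and each $M_{\vec{a}}$, being a direct summand of the finite locally free module $M$, is itself finite locally free over $R$, say of locally constant rank $n_{\vec{a}}$.

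Finally I would compare with the fibers. Since $\pi$ is affine, the fiber $\F_s$ corresponds to the $k(s)[x_1,\ldots,x_d]$-module $M \otimes_R k(s)$, and base change preserves the grading, so the weight-$\vec{a}$ part is $(F_s)_{\vec{a}} = M_{\vec{a}} \otimes_R k(s)$. This gives
\[
\chi_{\F_s}(\vec{a}) = \dim_{k(s)} (F_s)_{\vec{a}} = \dim_{k(s)}\!\left(M_{\vec{a}} \otimes_R k(s)\right) = n_{\vec{a}}(s),
\]
and since only finitely many $\vec{a}$ contribute, the intersection of the finitely many open sets on which the $n_{\vec{a}}$ are constant is a neighborhood of each point on which the entire function $\chi_{\F_s}$ is constant.

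The main point requiring care is the finiteness and local freeness of the graded pieces $M_{\vec{a}}$: this is where the $0$-dimensionality of the quotients enters, upgrading flatness of $M$ over $R$ to finite local freeness, and where Noetherianity of $R$ guarantees that only finitely many weights occur. Once these are in place, the affineness of $\pi$ trivializes the base-change step, since there are no higher direct images to control and the fiber of $\pi_*\F$ at $s$ is literally $M \otimes_R k(s)$.
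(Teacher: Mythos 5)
Your proof is correct and follows essentially the same route as the paper's: reduce to an affine Noetherian base, identify $\F$ with a $\ZZ^d$-graded module whose graded pieces $F_{\vec{a}}$ are finite flat (hence finite locally free) over $R$, and conclude that their fiber dimensions are locally constant. You additionally spell out why only finitely many weights occur and why a common neighborhood for all of them exists, points the paper leaves implicit.
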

\begin{proof}
Let $U = \Spec R \subseteq S$ be an affine, Noetherian open. It is sufficient to prove that $\chi_{\F_s}$ is locally constant on $U$. On $U$, we can identify $\F$ with a graded quotient module
\[
R[x_1, \ldots, x_d]^r \to F = \bigoplus_{\vec{a} \in \Zpos^d} F_\vec{a}.
\]
By flatness of $F$ over $R$, it follows that each $F_{\vec{a}}$ is flat. By the fact that $U$ is Noetherian and this flatness it follows (see \cite[Lemma 00NX]{stacks-project}) that $\dim_{k(s)} F_{\vec{a}} \otimes_R k(s)$ is locally constant when $s$ varies through $U = \Spec R$. This immediately implies that also $\chi_{\F_s}$ is locally constant.
\end{proof}

\begin{corollary}
We have decompositions
\[
(\Q_{r,d}^n)^T = \coprod_{\chi \in \X_{r,d}^n} \Q_\chi
\]
and
\[
(Q_{r,d}^n)^T = \coprod_{\chi \in \X_{r,d}^n} Q_\chi,
\]
where for $\chi \in \X_{r,d}^n$ we have that $\Q_\chi$ is the subfunctor of $(\Q_{r,d}^n)^T$ of families $(\F, q) \in (\Q_{r,d}^n)^T(S)$ which have characteristic function $\chi$ over every point $s \in S$, and $Q_\chi$ is a scheme representing $\Q_\chi$.
\end{corollary}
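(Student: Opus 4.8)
The plan is to deduce the decomposition formally from the two preceding lemmas, the crucial observation being that a locally constant function taking only finitely many values induces a partition into open-and-closed pieces. First I would record that the index set $\X_{r,d}^n$ is finite. By \cref{lemma:X} every $\chi \in \X_{r,d}^n$ is nonnegative, bounded above by $r$, and satisfies $\sum_{\vec{a} \in \Zpos^d} \chi(\vec{a}) = n$, while property~3 forces the support $\{\vec{a} : \chi(\vec{a}) > 0\}$ to be downward closed. Since each $\vec{a}$ in the support contributes the entire box $\{\vec{b} : \vec{b} \leq \vec{a}\}$ to the sum, we get $\prod_i (a_i + 1) \leq n$, so the support is contained in $\{0, \ldots, n-1\}^d$; there are only finitely many such $\chi$.

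Next I would apply the preceding lemma (local constancy of $\chi_{\F_s}$) to the universal family $(\F, q)$ over the scheme $X := (Q_{r,d}^n)^T$ itself, which represents $(\Q_{r,d}^n)^T$ by the construction of the fixed locus \cite{fogarty1973}. That lemma shows the assignment $x \mapsto \chi_{\F_x}$ defines a locally constant map $X \to \X_{r,d}^n$. For each $\chi$ I would let $Q_\chi \subseteq X$ be the locus on which this map takes the value $\chi$, with its open subscheme structure. Local constancy makes each $Q_\chi$ open; since there are finitely many of them and they partition $X$, each is also closed. A finite partition into open-and-closed subschemes is exactly a coproduct decomposition, yielding $(Q_{r,d}^n)^T = \coprod_{\chi} Q_\chi$ as schemes.

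It then remains to identify $Q_\chi$ as a representing object for $\Q_\chi$. The key point to verify is base-change compatibility of the characteristic function: for any $(\F', q') \in (\Q_{r,d}^n)^T(S)$ classified by a morphism $\phi \colon S \to X$ and any $s \in S$, the fiber $(\F'_s, q'_s)$ is obtained from the fiber of the universal family at $\phi(s)$ by the field extension $k(\phi(s)) \to k(s)$, and the dimensions of the graded pieces are preserved under extension of scalars, so $\chi_{\F'_s} = \chi_{\F_{\phi(s)}}$. Hence $(\F', q')$ lies in $\Q_\chi(S)$ precisely when $\phi$ carries $S$ set-theoretically into $Q_\chi$, which, as $Q_\chi$ is open-and-closed in $X$, happens precisely when $\phi$ factors through $Q_\chi$. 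This gives a natural bijection $\Q_\chi(S) \cong \Hom_{\Sch_\ZZ}(S, Q_\chi)$, so $Q_\chi$ represents $\Q_\chi$, and the scheme-level decomposition translates into the functor decomposition $(\Q_{r,d}^n)^T = \coprod_{\chi} \Q_\chi$.

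I do not expect a serious obstacle here: all the geometric content sits in the local-constancy lemma, and the remaining work is bookkeeping, namely the finiteness of $\X_{r,d}^n$ and the base-change compatibility of $\chi$. The one point deserving mild care is that a clopen partition yields a genuine coproduct of schemes, but this is clean because $\X_{r,d}^n$ is finite (and in any case $X$ is quasi-compact, being quasi-projective over $\Spec \ZZ$).
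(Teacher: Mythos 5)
Your proposal is correct and takes essentially the same route as the paper, which simply observes that the decomposition is a direct consequence of the preceding lemma on local constancy of $\chi_{\F_s}$; you have merely filled in the routine details (finiteness of $\X_{r,d}^n$, the clopen partition, and base-change compatibility of the characteristic function). Note only that the finiteness of the index set is not actually needed: a partition of a scheme into open subsets is automatically a coproduct decomposition, since each piece is then clopen regardless of how many there are.
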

\begin{proof}
This is a direct consequence of the previous lemma.
\end{proof}

\begin{remark}
Note that the parameters $d$ and $n$ can be recovered from the characteristic function $\chi$ by \cref{lemma:X}. Technically, the scheme $Q_\chi$ and its functor $\Q_\chi$ may depend on $r$, but in what follows we will always assume the used value of $r$ to be clear from the context.
\end{remark}

\section{Incidence schemes} \label{sec:incidence}
In order to describe the schemes $Q_\chi$ for $\chi \in \X_{r,d}^n$, we will relate them to incidence schemes of subspaces of $\PP^{r-1}$. In what follows, we will denote by $\Gr(m,r)$ the Grassmannian $m$-dimensional subspaces of $\AA^r$, or equivalently of $(m-1)$-dimensional subspaces of $\PP^{r-1}$. More explicitly, the functor of points of $\Gr(m,r)$ assigns to a scheme $S$ the set of quotients $\O_S^r \to \F$ where $\F$ is a locally free sheaf of rank $r-m$.

Similarly, for $m_1 < \cdots < m_k \leq r$ nonnegative integers, we denote by $\Fl(m_1, \ldots, m_k, r)$ the flag variety parametrizing length $k$ flags of subspaces $E_1 \subseteq \cdots \subseteq E_k \subseteq \PP^k$ with $\dim E_i = m_i-1$. This is a smooth, closed subscheme of $\prod_{i=1}^k \Gr(m_i, r)$.

The definition of an incidence structure given here is based on that from \cite[Section 5.1.1]{configurations}.
\begin{definition}
For $k \in \Zpos$, a \emph{rank $k$ incidence structure} is a tuple $\S = (P_1, \ldots, P_k, I)$ where the $P_i$ are disjoint index sets, and $I \subseteq \bigcup_{i < j} P_i \times P_j$. For such a structure, we will denote $P = \bigcup_i P_i$ and for $j \in P_i$, we write $d(j) = i$. Given two structures $\S_1 = (P_1, \ldots, P_k, I_1)$ and $\S_2 = (P_1', \ldots, P_k', I_2)$, we will say that they are \emph{equivalent} if there are identifications $P_i \cong P_i'$ for all $i$, such that $I_1$ and $I_2$ have the same transitive closure, when seen as antisymmetric relations on $\bigcup_i P_i \cong \bigcup_i P_i'$.

Given an incidence structure $\S$, we define a corresponding functor
\[
\C_\S \colon \Sch_\ZZ^{op} \to \Sets
\]
which parametrizes (families of) collections $(E_i)_{i \in P}$, where $E_i$ for $i \in P$ is an $d(i)-1$ dimensional subspace of $\PP^k$, such that the $E_i \subseteq E_j$ whenever $(i, j) \in I$. More formally, we define $\C_\S(S)$ for $S \in \Sch_\ZZ$ to be the subset of $\prod_{i \in P} \Gr(d(i), k+1)(S)$ consisting of tuples $(\F_i, q_i)_{i \in P}$ satisfying
\begin{itemize}
	\item $\F_i$ is a rank $k+1-d(i)$ locally free sheaf on $S$ for all $i \in P$,
	\item the maps $q_i \colon \O_S^r \to \F_i$ are surjective for all $i \in P$,
	\item $\ker(q_i) \subseteq \ker(q_j)$ for all $(i, j) \in I$.
\end{itemize}
Let the \emph{incidence scheme} $C_\S$ be the scheme representing this functor. This scheme can be constructed using a Cartesian square of the form
\[
\begin{tikzcd}
C_\S \ar[r] \ar[d] & \prod_{i \in P} \Gr(d(i), k+1) \ar[d]\\
\prod_{(i,j) \in I} \Fl(d(i), d(j), k+1) \ar[r,hook] & \prod_{(i,j) \in I} \Gr(d(i), k+1) \times \Gr(d(j), k+1)
\end{tikzcd}
\]
Note that equivalent incidence structures have the same functor $\C_\S$, so they also induce isomorphic incidence schemes.
\end{definition}

\begin{remark}
Often, when defining an incidence scheme, also conditions of the form ``if $(i, j) \not\in I$, then $E_i \not\subseteq E_j$'' are imposed, and a $\PGL(k+1)$ quotient is taken. (see e.g.~\cite{lafforgue2003chirurgie,lee2013mnevsturmfels}). We do not do this here.
\end{remark}

\begin{remark}
Note that a rank $k$ incidence structure also uniquely corresponds to an $k$-partite graph: here the vertices are given by $P$, and the edges are given by $I$. Here we consider a $k$-partite graph to be an undirected graph $(V, E)$, together with a partition $V = V_1 \cup \cdots \cup V_k$ such that for every edge $\{v, w\} \in E$, we have that $v$ and $w$ are contained in different parts of the partition. We will use this identification between incidence structures and $k$-partite graphs later.
\end{remark}

In the remainder of this section, we will prove that each of the schemes $Q_\chi$ is isomorphic to some incidence scheme. First we introduce some terminology for working with elements and subsets of $\Zpos^d$.

\begin{definition}
Let $\vec{a} = (a_i)_{i=1}^d, \vec{b} = (b_i)_{i=1}^d \in \Zpos^d$. We say that $\vec{a}$ and $\vec{b}$ are \emph{adjacent} if there is some $1 \leq j \leq d$ such that $a_i = b_i$ for all $i \neq j$ and $a_j = b_j \pm 1$. We write $\vec{a} \leq \vec{b}$ if $a_i \leq b_i$ for all $1 \leq i \leq d$. We say that a subset $A \subseteq \Zpos^d$ is \emph{connected} if for any $\vec{a}, \vec{b} \in A$ there are $\vec{a} = \vec{a}_0, \vec{a}_1, \ldots, \vec{a}_k = \vec{b} \in A$ for some $k \in \Zpos$ such that $\vec{a}_i$ and $\vec{a}_{i+1}$ are adjacent for all $0 \leq i \leq k-1$. For each $A \subseteq \Zpos^d$, we define its \emph{connected components} to be the maximal connected subsets of $A$.
\end{definition}

\begin{figure}
\centering
\begin{subfigure}[m]{0.49\textwidth}
\centering
\begin{tabular}{|ccccccc}
$\vdots$ & $\vdots$ & $\vdots$ & $\vdots$ & $\vdots$ & $\vdots$ & $\iddots$\\
0 & 0    & 0    & 0    & 0    & 0    & $\cdots$\\\cline{1-3}
2 & \vl1 & 1    & \vl0 & 0    & 0    & $\cdots$\\\cline{1-3}
3 & 3    & \vl2 & \vl0 & 0    & 0    & $\cdots$\\\cline{1-1}\cline{4-5}
4 & \vl3 & \vl2 & 2    & \vl1 & \vl0 & $\cdots$\\\cline{2-3}
4 & 4    & \vl3 & \vl2 & \vl1 & \vl0 & $\cdots$\\\cline{3-5}
4 & 4    & 4    & 4    & \vl3 & \vl0 & $\cdots$\\\hline
\end{tabular}
\caption{A characteristic function $\chi \in \X_{4,2}^\bullet$} \label{fig:incidence_chi}
\end{subfigure}
\begin{subfigure}[m]{0.49\textwidth}
\centering
\includegraphics[width=0.7\textwidth]{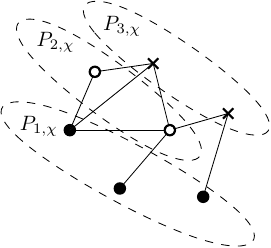}
\caption{The 3-partite graph corresponding to $\S_\chi$} \label{fig:incidence_S} 
\end{subfigure}
\caption{A characteristic function $\chi$ and the corresponding incidence structure $\S_\chi$.}
\label{fig:incidence}
\end{figure}

Fix some $\chi \in \X_{r,d}^n$. We will construct a rank $r-1$ incidence structure $\S_\chi$ such that $Q_\chi \cong C_{\S_{\chi}}$. An example of this construction is shown in \cref{fig:incidence}. For each $1 \leq i \leq r-1$, let $P_{i, \chi}$ be the set of connected components of $\chi^{-1}(r-i) \subseteq \Zpos^d$. For $\vec{a} \in \Zpos^d$ with $\chi(\vec{a}) = r-i$, denote by $[\vec{a}] \in P_{i, \chi}$ the connected component that contains $\vec{a}$. Define $I_\chi$ to contain exactly those pairs $(p, q) \in \bigcup_{i < j} P_{i,\chi} \times P_{j,\chi}$ for which there are $\vec{a} \in p$ and $\vec{b} \in q$ such that $\vec{a}$ and $\vec{b}$ are adjacent and $\vec{a} \leq \vec{b}$. We take $\S_\chi$ to be the incidence structure $(P_{1,\chi}, \ldots, P_{r-1,\chi}, I_\chi)$.

\begin{lemma} \label{lemma:Q_is_C}
For all $\chi \in \X_{r,d}^n$, there is an isomorphism $Q_\chi \cong C_{\S_\chi}$.
\end{lemma}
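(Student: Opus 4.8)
The plan is to construct a natural isomorphism between the functors $\Q_\chi$ and $\C_{\S_\chi}$; since both are representable (by $Q_\chi$ and $C_{\S_\chi}$), Yoneda then gives the isomorphism of schemes. I would work locally on an affine $\Spec R \subseteq S$ and check at the end that everything is canonical and hence glues. By \cref{lemma:fixed_to_equivariant} together with \cite[Lemma 0EKL]{stacks-project}, a point of $\Q_\chi(\Spec R)$ is exactly a $\ZZ^d$-graded quotient $q \colon R[x_1,\ldots,x_d]^r \to F = \bigoplus_{\vec{a}} F_{\vec{a}}$ that is flat over $R$ and has characteristic function $\chi$ at every point of $\Spec R$. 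Identifying the degree-$\vec{a}$ part of $R[x_1,\ldots,x_d]^r$ with $\O_{\Spec R}^r$ via $x^{\vec a}$, write $q_{\vec a}\colon \O^r \to F_{\vec a}$ for the graded components and $E_{\vec a} := \ker q_{\vec a}$. The key translation is that the module structure is equivalent to the containments $E_{\vec a} \subseteq E_{\vec a + \vec{e}_j}$ for all $\vec a$ and all $j$ (multiplication by $x_j$ being the identity under the identifications). Flatness together with the local constancy of $\chi$ makes each $F_{\vec a}$ locally free of rank $\chi(\vec a)$, so each $E_{\vec a}$ is a subbundle of rank $r - \chi(\vec a)$; thus for $\vec a \in \chi^{-1}(r-i)$ the quotient $q_{\vec a}$ defines a point of $\Gr(i, r)$.

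For the forward map I would first show that $E_{\vec a}$ depends only on the connected component of $\vec a$ in its level set. If $\vec a \leq \vec b = \vec a + \vec{e}_j$ and $\chi(\vec a) = \chi(\vec b)$, then the induced surjection $F_{\vec a} \twoheadrightarrow F_{\vec b}$ is a surjection of locally free sheaves of the same rank, hence an isomorphism (locally it is a surjective endomorphism of a finite free module), so $E_{\vec a} = E_{\vec b}$. Chaining adjacencies shows $E_{\vec a}$ is constant on each connected component, yielding for every $p \in P_{i,\chi}$ a well-defined subbundle, i.e.\ a quotient $q_p \colon \O^r \to \F_p$ in $\Gr(d(p),r)(\Spec R)$. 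For $(p,q') \in I_\chi$ there are adjacent $\vec a \in p$, $\vec b \in q'$ with $\vec a \leq \vec b$, so $E_{\vec a} \subseteq E_{\vec b}$ gives $\ker q_p \subseteq \ker q_{q'}$; hence $(q_p)_{p \in P} \in \C_{\S_\chi}(\Spec R)$.

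For the reverse map, given $(\F_p, q_p)_{p \in P} \in \C_{\S_\chi}(S)$, I set $E_{\vec a} := \ker q_{[\vec a]}$ when $0 < \chi(\vec a) < r$, and $E_{\vec a} := 0$ resp.\ $E_{\vec a} := \O_S^r$ when $\chi(\vec a) = r$ resp.\ $\chi(\vec a) = 0$; then $F_{\vec a} := \O_S^r/E_{\vec a}$ and $F := \bigoplus_{\vec a} F_{\vec a}$. The one thing to verify is that $q \colon \O^r \to F$ is a homomorphism of $R[x]$-modules, i.e.\ $E_{\vec a} \subseteq E_{\vec a + \vec{e}_j}$ for every adjacent pair $\vec a \leq \vec b = \vec a + \vec{e}_j$. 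By property $3$ of \cref{lemma:X}, $\chi(\vec a) \geq \chi(\vec b)$, and there are exactly three cases: if $\chi(\vec a) = \chi(\vec b)$ then $\vec a, \vec b$ lie in the same component and $E_{\vec a} = E_{\vec b}$; if $r > \chi(\vec a) > \chi(\vec b) > 0$ then $([\vec a],[\vec b]) \in I_\chi$ by the very definition of $I_\chi$, so the imposed condition gives $E_{\vec a} \subseteq E_{\vec b}$; and the boundary cases $\chi(\vec a) = r$ or $\chi(\vec b) = 0$ give $0 \subseteq E_{\vec b}$ or $E_{\vec a} \subseteq \O_S^r$ trivially. The resulting $F$ is a finite direct sum of locally free sheaves, hence flat, with characteristic function $\chi$, so $(\F, q) \in \Q_\chi(S)$.

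Finally, both assignments commute with base change, since the kernels and cokernels involved and the finite direct sums are all formed from locally free sheaves and hence pull back correctly, so they are natural transformations; and they are mutually inverse because each recovers the same collection of subbundles $E_{\vec a}$. I expect the main obstacle to be the bookkeeping over an \emph{arbitrary} base rather than a field: one must establish that each $F_{\vec a}$ is locally free (flatness plus local constancy of $\chi$) and, crucially, that a surjection of locally free sheaves of equal rank is an isomorphism, which is exactly what forces $E_{\vec a}$ to be constant on connected components. The combinatorial heart — that $I_\chi$ encodes precisely the submodule relations — then reduces to the clean case analysis above; note that recording only the adjacent, strictly decreasing incidences in $I_\chi$ suffices, as all remaining containments $E_{\vec a} \subseteq E_{\vec c}$ for $\vec a \leq \vec c$ follow by transitivity, matching the fact that $\C_\S$ depends on $I$ only through its transitive closure.
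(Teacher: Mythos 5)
Your proposal is correct and follows essentially the same route as the paper: reduce to affine bases via \cref{lemma:fixed_to_equivariant} and the correspondence with $\ZZ^d$-graded modules, show each $\ker q_{\vec a}$ is a subbundle constant on connected components of the level sets, and reverse the construction using that $I_\chi$ records exactly the adjacent strictly decreasing containments. Your explicit treatment of the surjection-of-equal-rank step and of the boundary cases $\chi(\vec a)\in\{0,r\}$ (which the paper's inverse construction glosses over) is a welcome extra bit of care, but not a different argument.
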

\begin{proof}
We will prove this by showing that the moduli functors $\Q_\chi$ and $\C_{\S_\chi}$ agree on affine schemes.

We start by taking $S = \Spec R$ an affine scheme and $(\F, q) \in \Q_\chi(S)$. By \cref{lemma:fixed_to_equivariant}, it follows that $q\colon \O_{\AA_S^d}^r \to \F$ is a morphism of $T$-equivariant sheaves, and by \cite[Lemma 0EKL]{stacks-project} it follows that it corresponds to a morphism of $\ZZ^d$-graded $R[x_1,\ldots,x_d]$-modules
\[
q \colon R[x_1,\ldots,x_d]^r \to F = \bigoplus_{\vec{a} \in \Zpos^d} F_\vec{a}.
\]
For every $\vec{a} \in \Zpos^d$, this gives a quotient map $q_\vec{a} \colon R^r \to F_{\vec{a}}$. We also know that $\F$ is flat over $R$, which implies that $F_{\vec{a}}$ is flat over $R$ for all $\vec{a} \in \Zpos^d$. By \cite[Lemma 00NX]{stacks-project}, it follows that $F_{\vec{a}}$ is a free $R$-module. Its rank is $\chi(\vec{a})$ by the definition of the characteristic function.

Let $\vec{a},\vec{b} \in \Zpos^d$ be adjacent with $b_k = a_k+1$. We know that $\ker q \subseteq R[x_1,\ldots, x_d]^r$ is an $R[x_1,\ldots,x_d]^r$-module, so in particular it is closed under multiplication by $x_k$. This implies that $\ker q_{\vec{a}} \subseteq \ker q_{\vec{b}}$. In particular, if also $\chi(\vec{a}) = \chi(\vec{b})$, it follows that $F_\vec{a} = F_\vec{b}$. Since each $p \in P_\chi := \bigcup_i P_{i, \chi}$ corresponds to some connected subset of $\Zpos^d$ on which $\chi$ is constant, it follows that we may define the quotient $q_p \colon R^r \to F_p$ such that $q_{\vec{a}} = q_p$ for all $\vec{a} \in p$.

This definition gives that for $i, j \in P_\chi$ with $(i, j) \in I_\chi$, we have $\ker q_i \subseteq \ker q_j$. Note that each quotient $q_i \colon R^r \to F_i$ can also be associated with a quotient of locally free sheaves $q_i \colon \O_S^r \to \F_i$, now it follows that $(\F_i, q_i)_{i \in P_\chi} \in \C_{\S_\chi}(S)$. So we have defined a map from $\Q_\chi(S)$ to $\C_{\S_\chi}(S)$, it can be checked that this map behaves well under base-change $S \to T$ to a different affine scheme.

To define the inverse of this map, let $(\F_i, q_i)_{i \in I} \in \C_{\S_\chi}(S)$. We can reverse the above construction to first get a graded quotient $R$-module
\[
R[x_1, \ldots, x_d]^r \to F = \bigoplus_{\vec{a} \in \Zpos^d} F_\vec{a},
\]
where $F_\vec{a} = F_{[\vec{a}]}$ is the $R$-module corresponding to the sheaf $\F_{[\vec{a}]}$. It follows that $F$ is also an $R[x_1, \ldots, x_d]$-module from the fact that $\ker(q_\vec{a}) \subseteq \ker(q_\vec{b})$ for all adjacent $\vec{a}, \vec{b} \in \Zpos^d$ with $\vec{a} \leq \vec{b}$. Furthermore, each $F_\vec{a}$ is a free $R$-module, which implies that $F$ is flat over $R$. The rank of $F_\vec{a}$ equals exactly $r-d([\vec{a}]) = \chi(\vec{a})$. It therefore corresponds to a $S$-flat, $T$-equivariant quotient $q \colon \O_{\AA_S^d}^r \to \F$, which has characteristic function $\chi$ over any fiber. We get that $(\F, q) \in \Q_\chi(S)$.

We see that the moduli functors of $Q_\chi$ and $C_{\S_\chi}$ agree on all affine schemes, so we conclude that $Q_\chi \cong C_{\S_\chi}$.
\end{proof}

We end this section with a lemma about incidence structures equivalent to $S_\chi$, which will be helpful later on. This basically states that for $\chi \in \X_{r,d}^n$ and $\vec{a}, \vec{b} \in \Zpos^d$ satisfying $\vec{a} \leq \vec{b}$ and $[\vec{a}] \neq [\vec{b}]$, we have that $([\vec{a}], [\vec{b}])$ is contained in the transitive closure of $I_\chi$. In particular, we can add it to $I_\chi$ to get an equivalent incidence structure.

\begin{lemma}\label{lemma:equiv_S_chi}
Let $\chi \in \X_{r,d}^n$. Let $I$ be an anti-reflexive relation on $P_\chi = \bigcup_i P_{i, \chi}$ such that $I_\chi \subseteq I$ and furthermore if $(p, q) \in I$, then there are $\vec{a} \in p$ and $\vec{b} \in q$ such that $\vec{a} \leq \vec{b}$. Then $\S = (P_{1,\chi}, \ldots, P_{r-1,\chi}, I)$ is an incidence structure, and it is equivalent to $\S_\chi$.
\end{lemma}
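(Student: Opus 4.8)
The plan is to verify the two assertions in turn: first that $\S = (P_{1,\chi}, \ldots, P_{r-1,\chi}, I)$ is a genuine rank $r-1$ incidence structure, i.e.\ that $I \subseteq \bigcup_{i<j} P_{i,\chi}\times P_{j,\chi}$, and then that $I$ and $I_\chi$ have the same transitive closure. The single technical ingredient behind both parts is the following observation, which I would isolate first: if $\vec{a} \leq \vec{b}$ and $\chi(\vec{a}) = \chi(\vec{b})$, then $[\vec{a}] = [\vec{b}]$. Indeed, along any lattice path $\vec{a} = \vec{a}_0, \vec{a}_1, \ldots, \vec{a}_m = \vec{b}$ in which each $\vec{a}_{l+1}$ is obtained from $\vec{a}_l$ by increasing a single coordinate by one, the value of $\chi$ is non-increasing (by the monotonicity in \cref{lemma:X}) yet returns to its starting value, hence is constant; thus all $\vec{a}_l$ lie in a single $\chi^{-1}(r-i)$ and are successively adjacent, so they all lie in the same connected component.

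For the first point, take $(p,q) \in I$ with $p \in P_{i,\chi}$ and $q \in P_{j,\chi}$, and pick $\vec{a} \in p$, $\vec{b} \in q$ with $\vec{a} \leq \vec{b}$ as provided by the hypothesis. Since $\chi$ equals $r-i$ on $p$ and $r-j$ on $q$, monotonicity gives $r-i = \chi(\vec{a}) \geq \chi(\vec{b}) = r-j$, so $i \leq j$. Equality $i = j$ is impossible: it would force $\chi(\vec{a}) = \chi(\vec{b})$, and the observation above would give $p = [\vec{a}] = [\vec{b}] = q$, contradicting anti-reflexivity of $I$. Hence $i < j$, and $I$ is a valid incidence relation, so $\S$ is an incidence structure.

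For the equivalence, since $I_\chi \subseteq I$ we have $\overline{I_\chi} \subseteq \overline{I}$ for the transitive closures, so it suffices to show that every $(p,q) \in I$ lies in $\overline{I_\chi}$. Choosing $\vec{a} \in p$, $\vec{b} \in q$ with $\vec{a} \leq \vec{b}$ and a monotone path $\vec{a} = \vec{a}_0, \ldots, \vec{a}_m = \vec{b}$ as above, I would examine each step: either $[\vec{a}_l] = [\vec{a}_{l+1}]$, or these components differ, in which case the observation forces $\chi(\vec{a}_l) > \chi(\vec{a}_{l+1})$, and since $\vec{a}_l, \vec{a}_{l+1}$ are adjacent with $\vec{a}_l \leq \vec{a}_{l+1}$ the pair $([\vec{a}_l], [\vec{a}_{l+1}])$ belongs to $I_\chi$ by its very definition. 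Collapsing maximal runs of equal consecutive components yields a chain $p = [\vec{a}_0], \ldots, [\vec{a}_m] = q$ of $I_\chi$-related pairs along which the part index $d(\cdot)$ strictly increases; as $p \neq q$ this chain is nontrivial, so $(p,q) \in \overline{I_\chi}$. This gives $\overline{I} \subseteq \overline{I_\chi}$, hence equality, so $\S$ and $\S_\chi$ are equivalent.

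The only real content is the monotone-path observation, which I expect to be the main (though modest) obstacle: one must combine the monotonicity of $\chi$ with the fact that adjacent points of equal $\chi$-value automatically lie in the same connected component. Everything else is bookkeeping with the partition into the $P_{i,\chi}$ and the definitions of $I_\chi$ and of transitive closure. In particular, all the relations involved run strictly upward in the part index $d(\cdot)$, so they are automatically antisymmetric and the phrase ``seen as antisymmetric relations'' in the definition of equivalence requires no separate verification.
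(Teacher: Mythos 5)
Your proposal is correct and follows essentially the same route as the paper: both arguments hinge on the monotone lattice path from $\vec{a}$ to $\vec{b}$ along which $\chi$ is non-increasing by \cref{lemma:X}, with each step either staying in the same connected component or producing an edge of $I_\chi$. The only cosmetic difference is that you verify $I \subseteq \bigcup_{i<j} P_{i,\chi}\times P_{j,\chi}$ directly up front (via your isolated observation and anti-reflexivity), whereas the paper deduces it at the end from the containment of transitive closures.
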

\begin{proof}
We show that $I$ and $I_\chi$ are relations on $P_\chi$ with the same transitive closure. Since $I_\chi \subseteq I$, it automatically follows that the transitive closure of $I_\chi$ is contained in that of $I$.

For the other direction, let $(p, q) \in I$, so $p \neq q$ and there are $\vec{a} \in p$ and $\vec{b} \in q$ such that $\vec{a} \leq \vec{b}$. Note that, by repeatedly increasing one of the coordinates by 1, we can find a sequence $\vec{a} = \vec{a}_0, \vec{a}_1, \ldots, \vec{a}_k = \vec{b}$ such that for all $0 \leq i < k$ we have that $\vec{a}_i$ and $\vec{a}_{i+1}$ are adjacent, and furthermore $\vec{a}_i \leq \vec{a}_{i+1}$. By \cref{lemma:X} we have that the value $\chi(\vec{a}_i)$ is non-increasing when $i$ increases. So for each $1 \leq i < k$ we either have that $\chi(\vec{a}_i) = \chi(\vec{a}_{i+1})$, in which case $[\vec{a}_i] = [\vec{a}_{i+1}]$, or we have that $\chi(\vec{a}_i) > \chi(\vec{a}_{i+1})$, in which case $([\vec{a}_i], [\vec{a}_{i+1}]) \in I_\chi$. From this we conclude that the pair $(p, q) = ([\vec{a}_0], [\vec{a}_k])$ is contained in the transitive closure of $I_\chi$. This proves that the transitive closure of $I$ is contained in the transitive closure of $I_\chi$.

In particular this implies that $I$ is a subset of $\bigcup_{i < j} P_i \times P_j$, so $\S$ is indeed an incidence structure. Since $I$ and $I_\chi$ have the same transitive closure, it follows that $\S$ and $\S_\chi$ are equivalent.
\end{proof}

\section{Characterizing possible incidence structures} \label{sec:char}
Now that we know that every scheme $Q_\chi$ is isomorphic to some incidence scheme $C_{\S_\chi}$, the next step is to characterize exactly what incidence structures $\S$ are of the form $\S_\chi$ for some $\chi \in \X_{r,d}^\bullet$.

To do this, note that we may identify $\S_\chi$ with an $(r-1)$-partite graph, where each vertex corresponds to some connected subset of $\ZZ^d$, and two vertices are connected by an edge if the corresponding subsets are adjacent. Using \cref{lemma:equiv_S_chi}, we may intuitively expect that not much information is lost if we project these subsets down to the quotient $\ZZ^d/(1, \ldots, 1)$: this could cause some subsets to intersect, but only when they contain $\vec{a}$ respectively $\vec{b}$ with $\vec{a} \leq \vec{b}$ or $\vec{b} \leq \vec{a}$. In this section, we will see that incidence structures coming from some characteristic function can indeed be identified with intersection graphs of subsets of $\RR^{d-1}$.

\begin{definition}
Given a set $U$ and a finite collection of subsets $\set{K_i \subseteq U}{1 \leq i \leq m}$, we define the \emph{intersection graph} of these subsets as the graph with vertex set $\{1, \ldots, m\}$, and with an edge between two vertices $i, j$ if and only if $K_i \cap K_j \neq \emptyset$.
\end{definition}

\begin{theorem} \label{theorem:char_chi}
Let $\S$ be a rank $r-1$ incidence structure. There is some $\chi \in \X_{r,d}^\bullet$ with $\S_\chi$ equivalent to $\S$, if and only if $\S$ is equivalent to an incidence structure corresponding to an $(r-1)$-partite intersection graph of nonempty, path-connected, compact subsets of $\RR^{d-1}$.
\end{theorem}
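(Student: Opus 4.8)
The plan is to prove the two implications separately, in both cases passing through the quotient projection $\pi\colon\RR^d\to\RR^d/\RR\cdot\vec{1}\cong\RR^{d-1}$, where $\vec{1}=(1,\dots,1)$; this is the projection alluded to before the statement. The forward implication (every $\S_\chi$ arises as such an intersection graph) is the more straightforward one. Given $\chi\in\X_{r,d}^\bullet$, I would attach to each connected component $p\in P_{i,\chi}$ the set
\[
K_p \;=\; \bigcup_{\vec{a}\in p}\pi\!\left(\vec{a}+[-\tfrac14,\tfrac14]^d\right)\subseteq\RR^{d-1},
\]
a nonempty, compact, path-connected set (a finite union of translated images of one convex cell, glued along the connectivity of $p$). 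Partitioning the $K_p$ by the rank $i$ gives a candidate $(r-1)$-partite intersection graph, and the task is to show it is equivalent to $\S_\chi$.

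The combinatorial heart of the forward direction is the elementary observation that, writing $\mathrm{spread}(\vec{w})=\max_i w_i-\min_i w_i$, the cells of $\vec{a}$ and $\vec{b}$ meet if and only if $\mathrm{spread}(\vec{a}-\vec{b})\le1$, since $[-\tfrac14,\tfrac14]^d-[-\tfrac14,\tfrac14]^d=[-\tfrac12,\tfrac12]^d$ and $\pi(\vec{a}-\vec{b})\in\pi([-\tfrac12,\tfrac12]^d)$ is exactly the spread condition. Two facts then follow: a difference of spread at most $1$ has all coordinates of one sign, so $\vec{a}$ and $\vec{b}$ are comparable; and adjacent lattice points (differing by some $\pm e_k$) have spread exactly $1$, so their cells meet. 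Consequently $I_\chi$ is contained in the edge set of the intersection graph, which is in turn contained in the comparability relation, which by \cref{lemma:equiv_S_chi} lies in the transitive closure of $I_\chi$. Hence all three relations have the same transitive closure, so the intersection graph is equivalent to $\S_\chi$; the same spread-to-comparability step, combined with the sandwiching argument behind \cref{lemma:X}, shows that two distinct components of the same level set have disjoint $K_p$, so the graph really is $(r-1)$-partite.

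For the converse I would start from a realizing family of nonempty, path-connected, compact sets $K_v\subseteq\RR^{d-1}$ and first replace them by connected ``polycubes'': choosing a sufficiently fine grid $\tfrac1\Lambda\ZZ^{d-1}$, let $S_v\subseteq\ZZ^{d-1}$ be the (rescaled) lattice points whose unit cell meets $K_v$. For $\Lambda$ large this preserves connectedness and reproduces the intersection graph exactly, with intersecting sets sharing a lattice point and disjoint sets separated by transverse distance of order $\Lambda$. I would then build $\chi$ by stacking: using the height $h(\vec{a})=\sum_i a_i$ and the transverse coordinate $\pi(\vec{a})$, encode $r$ nested up-sets $A^{(1)}\supseteq\dots\supseteq A^{(r)}$ of $\Zpos^d$ by ``radius functions'' $g_1\le\dots\le g_r\colon\ZZ^{d-1}\to\ZZ$ that are $1$-Lipschitz with respect to the projected steps $\pi(e_k)$ (this Lipschitz condition is exactly what makes $A^{(k)}=\{h\ge g_k\circ\pi\}$ an up-set), and set $\chi=r-\#\{k:\vec{a}\in A^{(k)}\}$. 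Monotonicity of $\chi$ is then automatic from \cref{lemma:X}. The rank-$i$ band $g_i\le h<g_{i+1}$ is arranged to be a thin bump (of bounded radial thickness) sitting over $S_v$ for each rank-$i$ blob $v$, so that the connected components of the level set $\chi^{-1}(r-i)$ are exactly the lifted blobs.

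The main obstacle is the verification that this $\chi$ has $\S_\chi$ equivalent to the given graph, i.e.\ that lattice comparability between two lifted blobs occurs precisely along the prescribed incidences. Intersecting blobs share a transverse fibre, along which the bands stack contiguously, producing a comparable (indeed $\vec{1}$-shifted) pair and hence an incidence. The delicate point is the absence of \emph{spurious} comparabilities between non-incident blobs: since a comparable pair $\vec{a}\le\vec{b}$ forces the height gap to dominate a norm of the transverse displacement $\pi(\vec{b}-\vec{a})$, keeping every radial thickness bounded while taking the transverse grid fineness $\Lambda$ large guarantees that blobs at transverse distance of order $\Lambda$ admit no comparable pair. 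I expect the bookkeeping for connectedness of each lifted blob (which needs thickness at least two together with the Lipschitz bumps), for the fine-grid discretization lemma, and for this thickness-versus-separation balance to be the most technical part; modulo these estimates, matching the transitive closure of $I_\chi$ with that of the intersection graph via \cref{lemma:equiv_S_chi} proceeds as in the forward direction.
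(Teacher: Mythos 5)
Your proposal follows essentially the same route as the paper in both directions: the forward implication uses small convex neighbourhoods of the projected lattice points (the paper takes balls of radius $2/3$ centred at $(a_1-a_d,\dots,a_{d-1}-a_d)$ rather than projected cubes) together with the same two key properties and \cref{lemma:equiv_S_chi}, and the converse uses the same scheme of fine discretization with quantitative separation, lifting by rank to thin two-layer bands in $\Zpos^d$, and the height-gap-versus-transverse-separation estimate to rule out spurious comparabilities (the paper packages the monotone extension as $\chi(\vec{a})=\min_{\vec{b}\le\vec{a}}\eta(\vec{b})$ rather than via nested up-sets cut out by Lipschitz radius functions). The remaining verifications you defer are bookkeeping of the same kind the paper carries out explicitly, so I see no genuine gap.
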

\begin{proof}
We start by showing that for each $\chi \in \X_{r,d}^\bullet$, the structure $\S_\chi$ is equivalent to some structure corresponding to an intersection graph of the desired type. For every point 
\[
\vec{a} = (a_1,\ldots,a_d) \in \Zpos^d,
\]
we let $B_{\vec{a}} \subseteq \RR^{d-1}$ be the closed ball centered at $(a_1-a_d, a_2-a_d, \ldots, a_{d-1}-a_d)$ with radius~$\frac{2}{3}$. The following two properties hold for this assignment:
\begin{enumerate}
\item If $B_\vec{a} \cap B_\vec{b} \neq \emptyset$ for $\vec{a}, \vec{b} \in \Zpos^2$, then either $\vec{a} \leq \vec{b}$ or $\vec{b} \leq \vec{a}$.
\item If $\vec{a}, \vec{b} \in \Zpos^2$ are adjacent, then $B_\vec{a} \cap B_\vec{b} \neq \emptyset$.
\end{enumerate}
Now to an element $p \in P_\chi$, we can assign the set $K_p := \bigcup B_\vec{a}$ where the union is taken over all $\vec{a} \in \Zpos^d$ in the subset corresponding to $p$. The second property of the balls $B_\vec{a}$, and the fact that $p$ corresponds to a connected subset of $\Zpos^d$, guarantees that this union will be a nonempty, path-connected, compact set. Let $I$ be the relation on $P_\chi$ which contains $(p, q)$ if and only if $p \neq q$, $d(p) \leq d(q)$ and $K_p \cap K_q \neq \emptyset$. Stated differently, $I$ contains exactly all the edges $(p, q)$ of the intersection graph of $\set{K_p}{p \in P_\chi}$, where we direct these edges such that $d(p) \leq d(q)$.

From the stated properties of the sets $B_\vec{a}$, it follows that $I$ contains $I_\chi$, and furthermore, if $(p, q) \in I$, then there are $\vec{a} \in p$ and $\vec{b} \in q$ such that $B_\vec{a} \cap B_\vec{b} \neq \emptyset$, so $\vec{a} \leq \vec{b}$. By \cref{lemma:equiv_S_chi}, we see that $\S = (P_\chi, I)$ is an incidence structure equivalent to $\S_\chi$. So indeed each $\S_\chi$ is equivalent to an incidence structure corresponding to an $(r-1)$-partite intersection graph of nonempty, path-connected, compact subsets of $\RR^{d-1}$.

\bigskip

\begin{figure}
\centering
\parbox{0.48\textwidth}{
\begin{subfigure}[b]{0.48\textwidth}
\includegraphics[width=\textwidth]{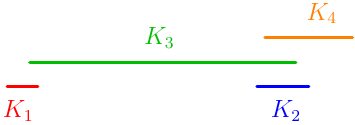}
\caption{The sets $K_p \subseteq \RR$}
\label{fig:picK}
\end{subfigure}

\vspace{1cm}

\begin{subfigure}[b]{0.48\textwidth}
\includegraphics[width=\textwidth]{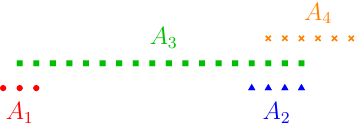}
\caption{The sets $A_p \subseteq \Zpos$}
\label{fig:picA}
\end{subfigure}
}
\hspace{0.02\textwidth}
\parbox{0.48\textwidth}{
\begin{subfigure}[b]{0.48\textwidth}
\includegraphics[width=\textwidth]{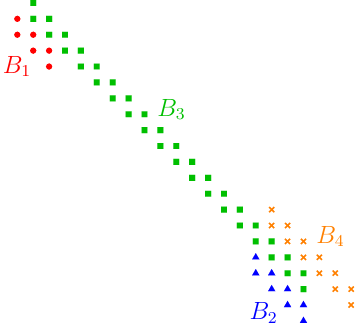}
\caption{The sets $B_p \subseteq \Zpos^2$}
\label{fig:picB}
\end{subfigure}
}

\vspace{1cm}

\parbox{0.48\textwidth}{
	\begin{subfigure}[b]{0.48\textwidth}
		\includegraphics[width=\textwidth]{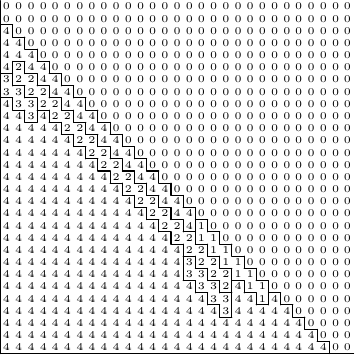}
		\caption{The function $\eta \colon \Zpos^2 \to \ZZ$}
		\label{fig:picEta}
	\end{subfigure}
}
\hspace{0.02\textwidth}
\parbox{0.48\textwidth}{
	\begin{subfigure}[b]{0.48\textwidth}
		\includegraphics[width=\textwidth]{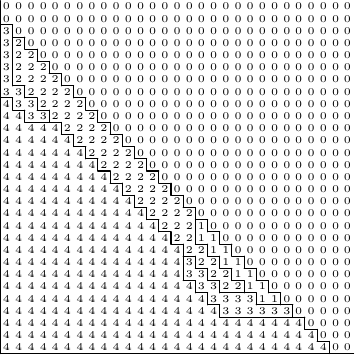}
		\caption{The function $\chi \colon \Zpos^2 \to \ZZ$}
		\label{fig:picChi}
	\end{subfigure}
}
\caption{An example of the construction from the second part of the proof of \cref{theorem:char_chi}, with $d = 2$, $r = 4$, $P_1 = \{1,2\}$, $P_2 = \{3\}$ and $P_3 = \{4\}$}
\label{fig:main_constr}
\end{figure}

Next we show that any incidence structure $\S = (P_0, \ldots, P_{r-2}, I)$ corresponding to some intersection graph of path-connected, compact subsets of $\RR^{d-1}$ is also equivalent to $\S_\chi$ for some $\chi \in \X_{r,d}^\bullet$. An example of this construction is shown in \cref{fig:main_constr}. Let $K_p \subseteq \RR^{d-1}$ for $p \in P$ be the compact set corresponding to $p$, so for $p, q \in P$ with $d(p) < d(q)$ we have $(p, q) \in I$ if and only if $K_p \cap K_q \neq \emptyset$. Furthermore, if $p, q \in P$ satisfy $d(p) = d(q)$, then $K_p \cap K_q = \emptyset$. Without loss of generality, we assume that $K_p \subseteq \RR_{\geq 0}^{d-1}$.

We first want to convert each $K_p$ into a subset of $\Zpos^{d-1}$. To achieve this, let $\delta \in \RR_{>0}$ be such that for any $p, q \in P$, we have that $K_p \cap K_q = \emptyset$ implies that the (Euclidean) distance between $K_p$ and $K_q$ is at least $\delta (4(r-1) + \sqrt{d})$. Now for all $p \in P$ we define
\[
A_p = \set{\vec{a} \in \Zpos^{d-1}}{\left[\delta a_1, \delta(a_1+1)\right]\times \cdots \times \left[\delta a_{d-1}, \delta(a_{d-1}+1)\right] \cap K_p \neq \emptyset},
\]
see also \cref{fig:picA}. Intuitively, we put a grid with cells of size $\delta$ on $\RR_{\geq 0}^{d-1}$, and let $A_p$ denote the indices of all the cells that intersect $K_p$. Note that each $A_p$ is connected, and for $(p, q) \in I$ we have $A_p \cap A_q \neq \emptyset$. Furthermore, if $p, q \in P$ with $d(p) \leq d(q)$ but $(p,q) \not\in I$, then the Euclidean distance between $K_p$ and $K_q$ is at least $\delta (4(r-1) + \sqrt{d})$, and it follows that the Manhattan-distance between any element of $A_p$ and any element of $A_q$ is at least $4(r-1)$.

The next step is to replace the sets $A_p$ by subsets of $\Zpos^d$. Let $M \in \Zpos$ be chosen such that $M > \sum_{k=1}^{d-1} a_k$ for any $\vec{a} \in \bigcup_{p \in P} A_p$. For every $p \in P$, we define the set $B_p \subseteq \Zpos^d$ in the following way:
\begin{multline*}
B_p = \set{\left(a_1, \ldots, a_{d-1}, M+2(d(p)-1) - \sum_{k=1}^{d-1} a_k\right)}{\vec{a} \in A_p} \\
\cup \set{\left(a_1, \ldots, a_{d-1}, M+2(d(p)-1)+1 - \sum_{k=1}^{d-1} a_k\right)}{\vec{a} \in A_p}.
\end{multline*}
See \cref{fig:picB}. This assignment satisfies the following properties, which will be proven after their statement:
\begin{enumerate}
\item Each $B_p$ is connected.
\item The sets $B_p$ are pairwise disjoint.
\item All the sets $B_p$ are contained in $\set{\vec{a} \in \Zpos^d}{M \leq \sum_{k=1}^d a_k < M+2(r-1)}$.
\item For $(p, q) \in I$, there are $\vec{a} \in B_p$ and $\vec{b} \in B_q$ with $\vec{a} \leq \vec{b}$.
\item If $p, q \in P$ with $A_p \cap A_q = \emptyset$, then for any $\vec{a} \in B_p$, $\vec{b} \in B_q$ and $\vec{c} \in \Zpos^d$ with $\vec{a} \leq \vec{c}$ and $\vec{b} \leq \vec{c}$, we have $\sum_{k=1}^d c_k \geq M+2(r-1)$.
\end{enumerate}
The first property here follows from the definition of $B_p$ and the fact that $A_p$ is connected. The second property uses that for $p, q \in P$ with $d(p) = d(q)$ we have $A_p \cap A_q = \emptyset$. Property 3 follows directly from the definition of $B_p$. Property 4 additionally uses that $(p, q) \in I$ if and only if $d(p) \leq d(q)$ and $A_p \cap A_q \neq \emptyset$. The last property uses the fact that the Manhattan-distance between $A_p$ and $A_q$ is at least $4(r-1)$ if $A_p \cap A_q = \emptyset$, so by the triangle inequality the distance between either $\vec{a}$ and $\vec{c}$ or $\vec{b}$ and $\vec{c}$ should be at least $2(r-1)$. Combined with the fact that $\vec{a} \leq \vec{c}$ and $\vec{b} \leq \vec{c}$ this gives the desired inequality.

Using these sets $B_p$, we define a helper function $\eta \colon \Zpos^d \to \ZZ$ as
\[
\eta(\vec{a}) =
\begin{cases}
	r & \mbox{if $\sum_{k=1}^d a_i < M$}\\
	0 & \mbox{if $\sum_{k=1}^d a_i \geq M+2(r-1)$,}\\
	r-d(p) & \mbox{if $\vec{a} \in B_p$ for some $p \in P$,}\\
	r & \mbox{otherwise,}
\end{cases}
\]
for $\vec{a} \in \Zpos^d$. See \cref{fig:picEta}. This is well-defined by properties 2 and 3 of the sets $B_p$. Now we define the characteristic function $\chi$ as
\[
\chi(\vec{a}) = \min_{\vec{b} \leq \vec{a}} \eta(\vec{b}) \qquad \text{for all } \vec{a} \in \Zpos^d.
\]
See \cref{fig:picChi}. From this definition and \cref{lemma:X} it immediately follows that $\chi \in \X_{r,d}^\bullet$, so we can consider the corresponding incidence structure $\S_\chi = (P_\chi, I_\chi)$. It can also be checked, using properties 2, 3 and 5 of the sets $B_p$, that for all $p \in P$ and $\vec{a} \in B_p$, we have $\chi(\vec{a}) = \eta(\vec{a}) = r-d(p)$. From this, and properties 1 and 5, it follows that for any $p \in P$, there is a unique $q \in P_\chi$ such that $B_p$ is a subset of $q$. Denote $C_p = q$. This induces an identification between $P$ and $P_\chi$, by sending $p \in P$ to $C_p \in P_\chi$.

Property 4 of the sets $B_p$ now implies for $(p, q) \in I$, that there are $\vec{a} \in C_p, \vec{b} \in C_q$ such that $\vec{a} \leq \vec{b}$. Furthermore, if $(C_p, C_q) \in I_\chi$, then there are $\vec{c} \in C_p$ and $\vec{d} \in C_q$ with $\vec{c} \leq \vec{d}$. By the definition of $\eta$ and $\chi$ it follows that there are $\vec{a} \in B_p, \vec{b} \in B_q$ with $\vec{a} \leq \vec{c} \leq \vec{d}$ and $\vec{b} \leq \vec{d}$. Since $\vec{d}$ satisfies
\[
\sum_{k=1}^d d_k < M+2(r-1),
\]
it follows using property 5 that $(p,q) \in I$. \Cref{lemma:equiv_S_chi} now shows that $\S$ and $\S_\chi$ are equivalent. This completes the proof of the theorem.
\end{proof}

\section{Applications} \label{sec:appl}

\subsection{Rank 1 and 2} \label{subsec:rank12}
For low rank, we can determine the possibilities for $Q_\chi$ directly from the fact that it is a rank $r-1$ incidence scheme. This gives the following result:
\begin{corollary}
Let $\chi \in \X_{r,d}^n$. If $r = 1$, then $Q_\chi$ is a reduced point. If $r = 2$, then $Q_\chi$ is a product of copies of $\PP^1$. In particular, if $r \leq 2$, then $(Q_{r,d}^n)^T$ is smooth.
\end{corollary}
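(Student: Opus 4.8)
The plan is to read off the structure of $Q_\chi$ from its identification with the incidence scheme $C_{\S_\chi}$ provided by \cref{lemma:Q_is_C}, and to observe that for $r \le 2$ the rank $r-1$ incidence structure $\S_\chi$ is so degenerate that $C_{\S_\chi}$ collapses to an explicit smooth scheme. Throughout I would work from the Cartesian square defining $C_\S$, noting that its shape simplifies drastically once the relation $I_\chi$ is empty.

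For $r = 1$ I would argue that $\S_\chi$ is a rank $0$ incidence structure: the index sets $P_{i,\chi}$ are indexed by $1 \le i \le r-1 = 0$, so there are none, and hence $P_\chi = \emptyset$. The incidence scheme is then the empty product $\prod_{i \in \emptyset}\Gr(d(i),1)$, which is $\Spec\ZZ$, a reduced point. For $r = 2$, the structure $\S_\chi$ has rank $1$, consisting of the single part $P_{1,\chi}$ together with a relation $I_\chi \subseteq \bigcup_{i<j}P_i\times P_j$; since there is only one part, no pairs $i<j$ exist and $I_\chi = \emptyset$. The lower row of the defining Cartesian square is then the identity on the empty product, so $C_{\S_\chi} = \prod_{i \in P_{1,\chi}}\Gr(d(i),2)$. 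Every $i \in P_{1,\chi}$ has $d(i) = 1$ and $\Gr(1,2) \cong \PP^1$, whence $C_{\S_\chi} \cong (\PP^1)^{|P_{1,\chi}|}$, a product of copies of $\PP^1$.

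Finally, for the smoothness assertion I would invoke the decomposition $(Q_{r,d}^n)^T = \coprod_{\chi \in \X_{r,d}^n} Q_\chi$. Each $Q_\chi$ is either a reduced point or a finite product of copies of $\PP^1$ over $\Spec\ZZ$, both of which are smooth, and smoothness is stable under disjoint union; hence $(Q_{r,d}^n)^T$ is smooth whenever $r \le 2$. I do not expect any genuine obstacle here: the only point requiring care is the correct interpretation of the degenerate empty index sets and empty relation in the definitions of a rank $r-1$ incidence structure and its incidence scheme, which is purely a matter of bookkeeping.
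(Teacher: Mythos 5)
Your proposal is correct and follows exactly the paper's own argument: identify $Q_\chi$ with $C_{\S_\chi}$ via \cref{lemma:Q_is_C}, observe that a rank $0$ incidence scheme is a point and that a rank $1$ incidence structure admits no incidence relations so its scheme is a product of $\Gr(1,2) \cong \PP^1$, then conclude smoothness from the decomposition into the $Q_\chi$. Your extra care with the empty index sets and the degenerate Cartesian square is sound bookkeeping that the paper leaves implicit.
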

\begin{proof}
For $r = 1$, note that $Q_\chi$ is a rank 0 incidence scheme by \cref{lemma:Q_is_C}. This immediately implies that $Q_\chi$ consists of just a single point.

For $r = 2$, the scheme $Q_\chi$ is a rank 1 incidence scheme of points in $\PP^1$. Since there cannot be any incidence relations between these points, it follows that $Q_\chi \cong \prod_{i=1}^k \PP^1$, where $k$ is the number of points parametrized by $\S_\chi$.
\end{proof}

Note that the $r = 1$ case here is exactly the well-known result that the torus-fixed locus of the Hilbert scheme of $n$ points $\Hilb_{\AA^d}^n = Q_{1, d}^n$ consists of isolated, reduced points.

\subsection{Dimension 1 and 2} \label{subsec:dim12}
If we consider the case $d = 1$, we see that all incidence structures $\S_\chi$ for $\chi \in \X_{r,1}^n$ correspond to intersection graphs of nonempty subsets of $\RR^0$. Since there is only one such subset, we immediately get the following corollary.

\begin{corollary}
Let $\S = (P_1, \ldots, P_{r-1}, I)$ be an incidence structure. There is some $\chi \in \X_{r,1}^\bullet$ with $\S_\chi$ equivalent to $\S$, if and only if $|P_i| \leq 1$ for all $1 \leq i \leq r-1$ and $\S$ is equivalent to an incidence structure corresponding to a complete graph. If this is the case, then $C_\S$ is isomorphic to the flag variety $\Fl(d_1, \ldots, d_k, r)$, where $d_1 < \cdots < d_k$ are the integers such that $P_{d_i} \neq \emptyset$.
\end{corollary}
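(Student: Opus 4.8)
The plan is to deduce the characterization directly from \cref{theorem:char_chi} specialized to $d = 1$, and then to identify the resulting scheme by unwinding the definition of $C_\S$.

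First I would settle the equivalence of conditions. By \cref{theorem:char_chi} with $d = 1$, the structures $\S$ that arise as some $\S_\chi$ are exactly those equivalent to an $(r-1)$-partite intersection graph of nonempty, path-connected, compact subsets of $\RR^{d-1} = \RR^0$. The only nonempty subset of $\RR^0$ is $\RR^0$ itself, so every admissible assignment sends each vertex $p$ to $K_p = \RR^0$; hence $K_p \cap K_q \neq \emptyset$ for all distinct $p, q$, and the intersection graph is complete. For such a complete graph to be $(r-1)$-partite with parts $P_1, \ldots, P_{r-1}$, no part may contain two vertices (they would be joined by an edge), forcing $|P_i| \leq 1$. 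This gives one implication. Conversely, if $|P_i| \leq 1$ for all $i$ and $\S$ is equivalent to a complete graph, then assigning $K_p = \RR^0$ to each vertex produces precisely this complete graph as its intersection graph, which is $(r-1)$-partite since all parts are singletons; \cref{theorem:char_chi} then yields a $\chi \in \X_{r,1}^\bullet$ with $\S_\chi$ equivalent to $\S$.

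For the scheme-level statement, I would assume $\S$ satisfies the above, write $P = \{p_1, \ldots, p_k\}$ with $p_j$ the unique element of $P_{d_j}$ and $d(p_j) = d_j$, and pass to the equivalent incidence structure whose relation is the full complete relation $\{(p_i, p_j) : i < j\}$; this is legitimate since equivalent incidence structures have isomorphic incidence schemes. Unwinding the definition of $\C_\S$, a family in $\C_\S(S)$ is then a tuple of surjections $q_{p_j} \colon \O_S^r \to \F_{p_j}$ with $\F_{p_j}$ locally free of rank $r - d_j$, subject to $\ker(q_{p_i}) \subseteq \ker(q_{p_j})$ whenever $i < j$. This is exactly the datum of a flag of locally free quotients $\O_S^r \twoheadrightarrow \F_{p_1} \twoheadrightarrow \cdots \twoheadrightarrow \F_{p_k}$, equivalently a flag of subbundles $\ker(q_{p_1}) \subseteq \cdots \subseteq \ker(q_{p_k}) \subseteq \O_S^r$ of ranks $d_1 < \cdots < d_k$, which is precisely the functor of points of $\Fl(d_1, \ldots, d_k, r)$. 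Identifying these functors gives $C_\S \cong \Fl(d_1, \ldots, d_k, r)$.

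I expect the only delicate point to be bookkeeping rather than a genuine obstacle. One must check that replacing $I$ by its transitive (complete) closure does not change $C_\S$, which follows from the definition of equivalence of incidence structures together with the transitivity of the containment relation $\subseteq$ among the kernels. The dimension conventions should also be verified carefully: a $(d_j-1)$-dimensional subspace of $\PP^{r-1}$ corresponds to a rank-$d_j$ kernel of $q_{p_j}$, matching the index $d_j$ appearing in $\Fl(d_1, \ldots, d_k, r)$. The degenerate case $k = 0$ (all $P_i$ empty) is consistent as well, with both $C_\S$ and the empty flag variety being a single reduced point.
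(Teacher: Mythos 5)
Your proof is correct and follows the same route as the paper, which derives this corollary directly from \cref{theorem:char_chi} with $d=1$ (the paper leaves the details implicit, noting only that $\RR^0$ has a unique nonempty subset). Your unwinding of the definition of $\C_\S$ to identify $C_\S$ with $\Fl(d_1,\ldots,d_k,r)$, including the rank bookkeeping and the reduction to the transitively closed relation, matches the intended argument.
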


When $d = 2$, we get exactly the interval graphs.

\begin{definition}
An \textit{interval graph} is a graph which is the intersection graph of a set of bounded, closed intervals $\set{J_i \subseteq \RR}{1 \leq i \leq m}$.
\end{definition}

\begin{corollary}
Let $\S$ be an incidence structure, and let $r$ be a nonnegative integer. There is some $\chi \in \X_{r,2}^\bullet$ with $\S_\chi$ equivalent to $\S$, if and only if $\S$ is equivalent to an incidence structure corresponding to an $(r-1)$-partite interval graph.
\end{corollary}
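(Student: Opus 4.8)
The plan is to deduce this corollary directly from \cref{theorem:char_chi}, specialized to $d = 2$. That theorem characterizes the incidence structures arising as $\S_\chi$ for $\chi \in \X_{r,d}^\bullet$ as precisely those equivalent to an $(r-1)$-partite intersection graph of nonempty, path-connected, compact subsets of $\RR^{d-1}$. With $d = 2$ the ambient space becomes $\RR^{d-1} = \RR$, so the entire content of the proof reduces to identifying the intersection graphs of nonempty, path-connected, compact subsets of $\RR$ with the interval graphs as defined above.

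The key observation is an elementary point-set fact: a subset of $\RR$ is nonempty, path-connected, and compact if and only if it is a nonempty bounded closed interval $[a,b]$ with $a \leq b$ (allowing $a = b$). Indeed, the connected subsets of $\RR$ are exactly the intervals, and among intervals the compact ones are precisely those that are closed and bounded; conversely, every bounded closed interval is nonempty, convex (hence path-connected), and compact. Since the definition of an interval graph permits degenerate intervals $[a,a] = \{a\}$, the family of admissible sets appearing in \cref{theorem:char_chi} for $d = 2$ coincides exactly with the family of bounded closed intervals used to define interval graphs.

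Granting this identification, both directions of the corollary follow at once. If $\S$ is equivalent to $\S_\chi$ for some $\chi \in \X_{r,2}^\bullet$, then by \cref{theorem:char_chi} it is equivalent to an $(r-1)$-partite intersection graph of nonempty, path-connected, compact subsets of $\RR$, which by the observation above is an $(r-1)$-partite interval graph. Conversely, any $(r-1)$-partite interval graph is by definition realized by a collection of bounded closed intervals, each of which is a nonempty, path-connected, compact subset of $\RR$, so \cref{theorem:char_chi} yields a $\chi \in \X_{r,2}^\bullet$ with $\S_\chi$ equivalent to $\S$.

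I do not expect a genuine obstacle here, as all the substance is contained in \cref{theorem:char_chi}; the only thing requiring verification is the elementary equivalence between nonempty compact connected subsets of $\RR$ and nonempty bounded closed intervals. The sole point to handle carefully is matching conventions at the two ends, namely confirming that degenerate single-point intervals are admissible on both sides so that no configuration is lost in either implication; since they are, the identification is an exact bijection of the relevant set families.
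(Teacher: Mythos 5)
Your proposal is correct and matches the paper's intent exactly: the paper states this corollary without an explicit proof, treating it as an immediate specialization of \cref{theorem:char_chi} to $d=2$, and your argument supplies precisely the omitted step (the identification of nonempty, path-connected, compact subsets of $\RR$ with nonempty bounded closed intervals, including the degenerate single-point case).
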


In the rest of this section, we will show that the incidence schemes corresponding to interval graphs are always smooth. In particular, this implies that $(Q_{r,2}^\bullet)^T$ is smooth for all $r$. In the proof, we use the following lemma.

\begin{lemma} \label{lemma:flag_proj_smooth}
Let $0 \leq d_1 < d_2 < d_3 \leq r$ be nonnegative integers. Then the projection
\[
\Fl(d_1, d_2, d_3, r) \to \Fl(d_1, d_3, r)
\]
is smooth.
\end{lemma}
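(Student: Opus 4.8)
The plan is to show that the projection $\pi \colon \Fl(d_1, d_2, d_3, r) \to \Fl(d_1, d_3, r)$ is a fiber bundle whose fibers are themselves smooth flag varieties, and hence is a smooth morphism. The key observation is that the data of a flag $(E_1 \subseteq E_2 \subseteq E_3)$ with $\dim E_i = d_i - 1$ refining a given flag $(E_1 \subseteq E_3)$ amounts to choosing the intermediate subspace $E_2$ with $E_1 \subseteq E_2 \subseteq E_3$. Since $E_1$ and $E_3$ are fixed in the target, and $E_2$ is a subspace of dimension $d_2 - 1$ squeezed between them, the choice of $E_2$ is precisely the choice of a $(d_2 - d_1)$-dimensional subspace of the quotient vector space $E_3 / E_1$, which is a fixed-rank object over the base.

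Concretely, I would work with the universal flag on $\Fl(d_1, d_3, r)$. Over this base there are universal subbundles $\mathcal{E}_1 \subseteq \mathcal{E}_3 \subseteq \O^r$ (equivalently, universal quotients in the convention of the paper) of the appropriate ranks, both locally free. The quotient $\mathcal{E}_3 / \mathcal{E}_1$ is then a locally free sheaf of rank $d_3 - d_1$ on $\Fl(d_1, d_3, r)$. I would identify $\Fl(d_1, d_2, d_3, r)$ with the relative Grassmannian
\[
\Fl(d_1, d_2, d_3, r) \cong \Gr_{\Fl(d_1, d_3, r)}(d_2 - d_1, \mathcal{E}_3/\mathcal{E}_1)
\]
parametrizing rank $(d_2 - d_1)$ subbundles of $\mathcal{E}_3 / \mathcal{E}_1$, with $\pi$ being the structure map of this relative Grassmannian. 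One checks this identification on functors of points: a $T$-point of the target is a flag $(\mathcal{E}_1 \subseteq \mathcal{E}_3)$ of subbundles of $\O_T^r$, and lifting it to a $T$-point of $\Fl(d_1, d_2, d_3, r)$ is exactly giving an intermediate subbundle $\mathcal{E}_2$ with $\mathcal{E}_1 \subseteq \mathcal{E}_2 \subseteq \mathcal{E}_3$, which by the correspondence theorem for subsheaves of a quotient is the same as a rank $(d_2 - d_1)$ subbundle of $\mathcal{E}_3 / \mathcal{E}_1$.

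The conclusion then follows from the general fact that the relative Grassmannian $\Gr_S(m, \mathcal{E})$ of a locally free sheaf $\mathcal{E}$ on $S$ is smooth over $S$ (it is Zariski-locally on $S$ a product $S \times \Gr(m, \mathrm{rk}\,\mathcal{E})$, and the ordinary Grassmannian is smooth over the base). Hence $\pi$ is a smooth morphism. The main obstacle I anticipate is purely bookkeeping: one must be careful with the paper's convention in which $\Gr(m, r)$ and $\Fl$ are defined via \emph{quotients} $\O^r \to \F$ rather than subsheaves, so that $\mathcal{E}_i$ here really means $\ker(q_i)$, and the inclusions of subbundles correspond to the appropriate surjections of quotients. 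Translating "intermediate subspace $E_2$ between $E_1$ and $E_3$" correctly into this quotient language, and verifying that the intermediate object is automatically a subbundle (locally free, locally split) rather than merely a subsheaf, is the step that requires the most care. Everything else is a standard application of the smoothness of the relative Grassmannian.
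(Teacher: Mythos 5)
Your proposal is correct and is essentially the paper's argument: both identify the fiber of the projection as the Grassmannian $\Gr(d_2-d_1,\,d_3-d_1)$ of intermediate subspaces between $E_1$ and $E_3$, and conclude smoothness from the map being a Grassmannian bundle. The only difference is presentational — you package this as the relative Grassmannian of the locally free sheaf $\ker(q_3)/\ker(q_1)$ and cite its smoothness over the base, whereas the paper carries out the local trivialization by hand on opens $U$ where chosen framings $\O^{r_3}\hookrightarrow\O^{r_1}\hookrightarrow\O^r$ split the universal quotients.
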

\begin{proof}
We will show that $\Fl(d_1, d_2, d_3, r)$ is a $\Gr(d_2-d_1, d_3-d_1)$-bundle over $\Fl(d_1, d_3, r)$. Note that the scheme $\Fl(d_1, d_3, r)$ parametrizes, over some $S \in \Sch_\ZZ$, pairs of quotients $q_1 \colon \O_S^r \to \F_1$ and $q_3 \colon \O_S^r \to \F_3$, where $\F_1$ is locally free of rank $r-d_1$ and $\F_3$ is locally free of rank $r-d_3$, such that furthermore $\ker(q_1) \subseteq \ker(q_3)$. This implies that $q_3$ uniquely factors as $q_3 = q_3' \circ q_1$, where $q_3' \colon \F_1 \to \F_3$ is another quotient map. So $\Fl(d_1, d_3, r)$ can also be considered to parametrize chains of quotients of locally free sheaves
\[
\O_S^r \to \F_1 \to \F_3.
\]
Similarly, $\Fl(d_1, d_2, d_3, r)$ can be considered to parametrize chains of quotients of the form
\[
\O_S^r \to \F_1 \to \F_2 \to \F_3,
\]
where $\F_1, \F_2, \F_3$ are locally free sheaves of rank respectively $r-d_1$, $r-d_2$ and $r-d_3$. For ease of notation, we will denote these ranks by $r_1$, $r_2$ and $r_3$ respectively.
	
Choose embeddings
\[
\O_{\Spec \ZZ}^{r_3} \hookrightarrow \O_{\Spec \ZZ}^{r_1} \hookrightarrow \O_{\Spec \ZZ}^r.
\]
Now consider the open subset $U \subseteq \Fl(d_1, d_3, r)$ which over $S$ parametrizes chains
\[
\O_S^r \to \F_1 \to \F_3
\]
such that the compositions $\O_S^{r_1} \hookrightarrow \O_S^r \to \F_1$ and $\O_S^{r_3} \hookrightarrow \O_S^r \to \F_3$ are isomorphisms. Note that these $U$ cover $\Fl(d_1, d_3, r)$ when varying the embeddings of $\O_{\Spec \ZZ}^{r_1}$ and $\O_{\Spec \ZZ}^{r_3}$.
	
Lifting a point in $U$ corresponding to a chain $\O_S^r \to \F_1 \to \F_3$ to a point in $\Fl(d_1,d_2,d_3,r)$ corresponds to giving a pair of quotient maps $\F_1 \to \F_2$ and $\F_2 \to \F_3$ such that the composition
\[
\O_S^{r_3} \hookrightarrow \O_S^{r_1} \cong \F_1 \to \F_2 \to \F_3 \cong \O_S^{r_3}
\]
is the identity. From this we can get a quotient map $\O_S^{r_1}/\O_S^{r_3} \to \F_2/\O_S^{r_3}$, which gives a point in $\Gr(d_3-d_1, d_2-d_1)(S)$. It can be checked that this gives a bijection
\[
(\Fl(d_1,d_2,d_3,r) \times_{\Fl(d_1,d_2,r)} U)(S) \to \Gr(d_3-d_1, d_2-d_1)(S) \times U(S),
\]
which extends to an equivalence of the corresponding moduli functors. Since we can cover $\Fl(d_1,d_2,r)$ by opens $U$ of this form, it follows that $C_\S$ is indeed a $\Gr(d_2-d_1, d_3-d_1)$-bundle over $\Fl(d_1,d_2,r)$, and therefore the projection
\[
\Fl(d_1, d_2, d_3, r) \to \Fl(d_1, d_3, r)
\]
is smooth.
\end{proof}

\begin{proposition}
For any nonnegative integers $r$ and $n$, the scheme $(Q_{r,2}^n)^T$ is smooth.
\end{proposition}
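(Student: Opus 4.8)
The plan is to reduce the proposition to a smoothness statement about incidence schemes of interval graphs, and then to build such schemes as iterated smooth bundles. By the decomposition $(Q_{r,2}^n)^T = \coprod_{\chi \in \X_{r,2}^n} Q_\chi$ together with \cref{lemma:Q_is_C}, it suffices to show that each $Q_\chi \cong C_{\S_\chi}$ is smooth. By the preceding corollary, when $d = 2$ the structure $\S_\chi$ is equivalent to an $(r-1)$-partite interval graph, so the whole statement reduces to proving that $C_\S$ is smooth whenever $\S$ corresponds to an $(r-1)$-partite interval graph. I would prove this by induction on the number of vertices of $\S$, the base case being the empty configuration, where $C_\S = \Spec \ZZ$ is smooth.

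For the inductive step I would fix a concrete interval representation $\{J_p\}_{p \in P}$ and peel off the vertex $v$ whose interval $J_v$ has the smallest right endpoint $x$. The key observation is that every neighbor $q$ of $v$ satisfies $x \in J_q$, since $J_q$ meets $J_v$ and has right endpoint at least $x$; hence the neighbors of $v$ pairwise intersect and form a clique. Because $\S$ is $(r-1)$-partite, adjacent vertices lie in distinct parts and so correspond to subspaces of distinct dimensions, and a pairwise-comparable family of subspaces of distinct dimensions is automatically a flag. Thus the subspaces attached to the neighbors of $v$ assemble into a single flag $E_{w_1} \subseteq \cdots \subseteq E_{w_s}$; if $d(w_t) < d(v) < d(w_{t+1})$, then the only binding constraints on $E_v$ are $E_{w_t} \subseteq E_v \subseteq E_{w_{t+1}}$, and every further incidence forced on $E_v$ by the transitive closure of $I$ factors through this flag and is therefore automatic.

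Writing $\S'$ for $\S$ with $v$ deleted (still an $(r-1)$-partite interval graph, so $C_{\S'}$ is smooth by the inductive hypothesis), I would then identify the forgetful morphism $C_\S \to C_{\S'}$ as a base change of a flag projection. Remembering the pair $E_{w_t} \subseteq E_{w_{t+1}}$ gives a map $C_{\S'} \to \Fl(d(w_t), d(w_{t+1}), r)$, and the discussion above exhibits $C_\S$ as the fiber product
\[
C_\S \cong C_{\S'} \times_{\Fl(d(w_t), d(w_{t+1}), r)} \Fl(d(w_t), d(v), d(w_{t+1}), r).
\]
Since \cref{lemma:flag_proj_smooth} gives that $\Fl(d(w_t), d(v), d(w_{t+1}), r) \to \Fl(d(w_t), d(w_{t+1}), r)$ is smooth, and smoothness is stable under base change, the map $C_\S \to C_{\S'}$ is smooth, and hence $C_\S$ is smooth. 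The boundary cases use the same lemma in degenerate form: when $v$ has no smaller-dimensional neighbor one takes $d_1 = 0$, so that $\Fl(d(v), d(w_1), r) \to \Gr(d(w_1), r)$ is the relevant smooth map; when $v$ has no larger-dimensional neighbor one takes $d_3 = r$, giving $\Fl(d(w_s), d(v), r) \to \Gr(d(w_s), r)$; and when $v$ is isolated, $C_\S \cong C_{\S'} \times \Gr(d(v), r)$ and the projection is smooth.

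The main obstacle I anticipate is exactly the step showing that the neighbors of the peeled-off vertex form a flag rather than an arbitrary family of subspaces. It is essential to peel off a simplicial vertex — here realized by the interval with least right endpoint — because for a general vertex the subspaces it is constrained to contain need not be nested, the fiber dimension of the forgetful map would jump, and smoothness would fail. This is the one place where the interval hypothesis (as opposed to merely being $(r-1)$-partite) is genuinely used, and it is what forces the induction to proceed along a perfect elimination ordering rather than in an arbitrary order.
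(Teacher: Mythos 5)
Your proposal is correct and follows essentially the same route as the paper: both arguments peel off the vertex whose interval has the least right endpoint, observe that its neighbors pairwise intersect and hence (by $(r-1)$-partiteness) carry a full flag of subspaces, reduce the incidences at that vertex to the two adjacent dimensions in the flag, and exhibit $C_\S \to C_{\S'}$ as a base change of the smooth projection $\Fl(d_1,d_2,d_3,r)\to\Fl(d_1,d_3,r)$ from \cref{lemma:flag_proj_smooth}, with the same degenerate conventions $d_1=0$ and $d_3=r$. No gaps.
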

\begin{proof}
Let $\S = (P, I)$ be an incidence structure corresponding to an $(r-1)$-partite interval graph. Let $\set{J_i \subseteq \RR}{i \in P}$ be the corresponding intervals. We will show that we can drop one element from $P$ to get a new smaller incidence structure $\S'$, with the incidence relations still coming from the same set of intervals, such that the projection $C_\S \to C_{\S'}$ is smooth. This proves by induction that $C_\S$ is smooth, and therefore that for all $\chi \in \X_{r,2}^n$ also $Q_\chi \cong C_{\S_\chi}$ is smooth. This would prove the proposition.

Let $p_2$ be the element of $P$ such that $x := \max J_{p_2}$ is minimized, and denote $d_2 = d(p_2)$. Note that this means that for any $p \in P$ with $J_p \cap J_{p_2} \neq \emptyset$, we have $x \in J_p$. In particular, if $p, q \in P$ are chosen such that $J_p$ and $J_q$ both intersect $J_{p_2}$, then $J_p \cap J_q \neq \emptyset$. Define $P' = P \setminus \{p_2\}$, and let $I' = \set{(p, q) \in I}{p \neq p_2 \wedge q \neq p_2}$. Let $\S' = (P', I')$ be the corresponding incidence structure

Suppose that there is at least one $p \in P$ such that $(p_2, p) \in I$. Then define $p_3 \in P$ to be the element with $(p_2, p_3) \in I$ which minimizes $d(p_3)$, and denote $d_3 = d(p_3)$. By the preceding discussion, it follows that for all other $p \in P$ with $(p_2, p) \in I$, we have $J_p \cap J_{p_2} \neq \emptyset$, so $(p_2, p) \in I$. In the case that there is no $p \in P$ with $(p_2, p) \in I$, we define $d_3 = r$.

Similarly, if it exists, we can define $p_1$ to be the element of $P$ maximizing $d(p_1)$ such that $(p_1, p_2) \in I$, and denote $d_1 = d(p_1)$. If there is no such $p_1$, we instead use $d_1 = 0$. In case that $p_1$ and $p_3$ are both defined, we can define $\tilde{I} = I' \cup \{(p_1, p_2), (p_2, p_3)\}$. If one (or both) of $p_1$ and $p_3$ is not defined, we use the same definition except for the fact that we leave out the corresponding edge of $(p_1, p_2)$ and $(p_2, p_3)$. Denote $\tilde{\S} = (P, \tilde{I})$. Note that $p_1$ and $p_3$ are defined such that $I$ is contained in the transitive closure of $\tilde{I}$, so $\S$ and $\tilde{\S}$ are equivalent incidence structures.

Note that there is a projection map $C_{\tilde{\S}} \cong C_\S \to \Fl(d_1, d_2, d_3, r)$, which only remembers the components of $C_\S$ corresponding to $p_1$, $p_2$ and $p_3$, if those elements exist. Similarly, there is a projection $C_{\S'} \to \Fl(d_1, d_3, r)$. These projections can be seen to sit in a Cartesian square of the following form:
\[
\begin{tikzcd}
C_{\tilde{\S}} \ar[r]\ar[d]& \Fl(d_1, d_2, d_3, r)\ar[d]\\
C_{\S'}\ar[r] & \Fl(d_1, d_3, r)
\end{tikzcd}
\]
By \cref{lemma:flag_proj_smooth} it follows that the projection $\Fl(d_1, d_2, d_3, r) \to \Fl(d_1, d_3, r)$ is smooth, therefore also $C_\S \cong C_{\tilde{\S}} \to C_{\S'}$ is smooth.
\end{proof}

\subsection{Dimension 3} \label{subsec:dim3}
Next we can look at the case $d = 3$. In this case the relevant incidence structures correspond to intersection graphs of nonempty connected compact subsets of $\RR^2$. These graphs are also knows as string graphs.

\begin{definition}
A graph is a \textit{string graph} if it is the intersection graph of some subsets of $\RR^2$ (``strings'') that are all homeomorphic to the closed interval $[0,1]$.
\end{definition}

\begin{corollary}
Let $\S$ be an incidence structure, and let $r$ be a nonnegative integer. There is some $\chi \in \X_{r,3}^\bullet$ with $\S_\chi$ equivalent to $\S$, if and only if $\S$ is equivalent to an incidence structure corresponding to an $(r-1)$-partite string graph.
\end{corollary}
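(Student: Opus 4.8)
The plan is to read off this corollary from \cref{theorem:char_chi} with $d = 3$, for which the geometric sets appearing there are exactly the nonempty, path-connected, compact subsets of $\RR^2$. Since both \cref{theorem:char_chi} and the corollary keep the same vertex set $P$ and the same partition into the $P_i$, the statement reduces to a purely graph-theoretic equivalence: a graph is a string graph if and only if it is the intersection graph of nonempty, path-connected, compact subsets of $\RR^2$. This is the classical characterization of string graphs as intersection graphs of connected sets in the plane (Ehrlich--Even--Tarjan), and I would prove the two directions separately. The ``if'' direction is immediate: a string is homeomorphic to $[0,1]$, hence is itself a nonempty, path-connected, compact subset of $\RR^2$, so any $(r-1)$-partite string graph is in particular an $(r-1)$-partite intersection graph of sets of the type required by \cref{theorem:char_chi}, which then yields a $\chi \in \X_{r,3}^\bullet$ with $\S_\chi$ equivalent to $\S$.

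For the ``only if'' direction I would argue as follows. Suppose $\S = \S_\chi$ for some $\chi \in \X_{r,3}^\bullet$; by \cref{theorem:char_chi} I may assume $\S$ is the incidence structure of the intersection graph $G$ of nonempty, path-connected, compact sets $K_1, \ldots, K_m \subseteq \RR^2$, and I must realize $G$ as a string graph. For each edge $ij$ of $G$ I fix a contact point $p_{ij} \in K_i \cap K_j$, and inside each $K_i$ I use path-connectedness to join the finitely many points $\{p_{ij} : j \sim i\}$ by a finite tree $T_i$ assembled from arcs of $K_i$. Let $2\eta$ be the minimum of $\mathrm{dist}(K_i, K_\ell)$ over non-adjacent pairs, which is positive by compactness, and fix $\rho < \eta$. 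I then convert each branching tree $T_i$ into a single string $S_i$ inside the $\rho$-neighborhood $N_i$ of $T_i$ by taking an Euler tour of $T_i$ with every edge doubled: offsetting the two copies of each edge within $N_i$, and routing exactly one visit of the tour through each contact point, produces a simple closed curve passing through all the $p_{ij}$, from which deleting a tiny sub-arc gives a simple arc $S_i \simeq [0,1]$. For an edge $ij$ both $S_i$ and $S_j$ pass through $p_{ij}$, so $S_i \cap S_j \neq \emptyset$; for a non-edge $i\ell$ the inclusions $S_i \subseteq N_i$ and $S_\ell \subseteq N_\ell$ together with $\rho < \eta$ force $N_i \cap N_\ell = \emptyset$ and hence $S_i \cap S_\ell = \emptyset$. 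Thus $G$ is the string graph of $\{S_i\}$, with unchanged vertex set and partition, so $\S$ is equivalent to an $(r-1)$-partite string graph.

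The step I expect to be the main obstacle is this conversion of the tree $T_i$ into an honest simple arc: a tree generically has branch points and so is not homeomorphic to $[0,1]$, and I must simultaneously (a) keep $S_i$ inside the controlled neighborhood $N_i$ so that no spurious intersection with a non-adjacent string is created, and (b) force $S_i$ to genuinely meet each adjacent string at the chosen contact point without introducing self-intersections where the Euler tour revisits a vertex. The doubled-tree Euler tour, realized as the boundary of a thin regular neighborhood of $T_i$ with a single visit routed exactly through each $p_{ij}$, is what reconciles the planar topology with both requirements, while the choice of $\rho$ via compactness handles all the intersection bookkeeping at once. If one prefers, this entire step may be quoted directly from the Ehrlich--Even--Tarjan characterization of string graphs rather than reproduced.
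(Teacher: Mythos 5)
Your proposal is correct and follows essentially the same route as the paper: specialize \cref{theorem:char_chi} to $d=3$, and prove that string graphs coincide with intersection graphs of nonempty, path-connected, compact subsets of $\RR^2$ by choosing contact points $p_{ij}$ and rerouting each set as a string confined to a small neighborhood that preserves the intersection pattern. The paper merely asserts that such a string through the contact points can be constructed, whereas you supply the doubled-tree Euler-tour construction explicitly; that is additional detail rather than a different argument.
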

\begin{proof}
We will give a short argument here why string graphs are exactly the intersection graphs of nonempty connected compact subsets of $\RR^2$. Note that any string graph is an intersection graph of nonempty connected compact subsets of $\RR^2$ by definition. For the other direction, suppose that we have an intersection graph corresponding to connected compacts $\{J_i \subseteq \RR^2\}$. Now for each $J_i$ we can choose some open neighborhood $N_i \supset J_i$ such that for all $i, j$ we have $N_i \cap N_j \neq \emptyset$ if and only if $J_i \cap J_j \neq \emptyset$. For each pair $i, j$ with $J_i \cap J_j \neq \emptyset$ we can furthermore choose a point $p_{ij} = p_{ji} \in J_i \cap J_j$, and then for every $J_i$ we can construct a string $S_i \subseteq N_i$ which passes through all the points $p_{ij}$. The intersection graph of these strings is exactly equal to the original intersection graph of the sets $J_i$.
\end{proof}

\begin{figure}
	\centering
	\begin{subfigure}{0.4\textwidth}
		\centering
		\includegraphics[page=1, width=0.7\textwidth]{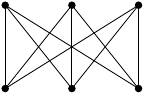}
	\end{subfigure}
	\begin{subfigure}{0.4\textwidth}
		\centering
		\includegraphics[page=2, width=0.7\textwidth]{Visualisation/string_graphs.pdf}
	\end{subfigure}
	\caption{The graph $K_{3, 3}$, and a set of strings which have $K_{3,3}$ as their intersection graph.} \label{fig:K33_string}
\end{figure}

\begin{figure}
	\centering
	\includegraphics[page=3, width=0.3\textwidth]{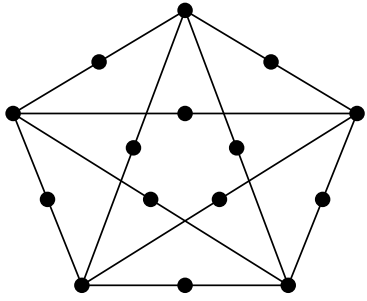}
	\caption{A subdivided $K_5$, which is not a string graph}
	\label{fig:subd_K5}
\end{figure}

String graphs form quite a large class of graphs, for example every planar graph can be seen to be a string graph. There are also string graphs which are not planar graphs, for example the complete bipartite graph on two sets of 3 vertices, $K_{3, 3}$, see \cref{fig:K33_string}. An example of a graph which is not a string graph is the subdivision of $K_5$ shown in \cref{fig:subd_K5}, where every edge is divided into two edges by a vertex.

It turns out that for $d = 3$ and $r \geq 3$, the scheme $(Q_{r,d}^\bullet)^T$ is not smooth.

\begin{example} \label{ex:dim3sing}
Consider the following characteristic function in $\X_{3,3}^{11}$:
\[
\chi(\vec{a}) =
\begin{cases}
	3 & \mbox{if } \vec{a} = (0,0,0),\\
	2 & \mbox{if } \vec{a} \in \{(1,0,0), (0,1,0)\},\\
	1 & \mbox{if } \vec{a} \in \{(1,1,0),(0,0,1),(1,0,1),(0,1,1)\},\\
	0 & \mbox{otherwise.}
\end{cases}
\]
The corresponding incidence structure $\S_\chi$ corresponds to the complete bipartite graph $K_{2,2}$, so the incidence scheme $Q_\chi \cong C_{I_\chi}$ for this characteristic function parametrizes two points and two lines in $\PP^2$, such that both points lie on both lines. This incidence scheme has two irreducible components: one corresponding to configurations where the two points coincide, and one component with configurations where the two lines coincide.

To study the singularity type of this singular locus, consider some embedding of $\AA^2$ in $\PP^2$, and consider the affine open $U \subseteq Q_\chi$ where the points can be given as $(x_1, y_1), (x_1, y_2) \in \AA^2$ and the lines can be given by equations $y = a_1x + b_1$ and $y = a_2x + b_2$, so we can identify
\[
U \cong \Spec\left(\frac{\ZZ[x_1, y_1, x_2, y_2, a_1, b_1, a_2, b_2]}{(y_1-a_1x_1-b_1,y_2-a_1x_2-b_1,y_1-a_2x_1-b_2,y_2-a_2x_2-b_2)}\right).
\]
Note that the rest of $Q_\chi$ can be covered by affine schemes isomorphic to $U$. Now there is an isomorphism
\[
U \to \Spec(\ZZ[x,y]/(xy)) \times \AA^3
\]
which sends
\[
(x_1, y_1, x_2, y_2, a_1, b_1, a_2, b_2) \mapsto (x_1-x_2, a_1-a_2, x_1, a_1, y_1),
\]
so we see that the singularity type of the singular locus of $Q_\chi$ is the same as that of the origin in $\Spec(\ZZ[x,y]/(xy))$.

For higher values of $r$, we can instead consider $\chi \in \X_{r,3}^{8+r}$ given by
\[
\chi(\vec{a}) =
\begin{cases}
	r & \mbox{if } \vec{a} = (0,0,0),\\
	2 & \mbox{if } \vec{a} \in \{(1,0,0), (0,1,0)\},\\
	1 & \mbox{if } \vec{a} \in \{(1,1,0),(0,0,1),(1,0,1),(0,1,1)\},\\
	0 & \mbox{otherwise.}
\end{cases}
\]
For the same reasons as in the $r = 3$ case, the corresponding scheme $Q_\chi$ consists of two irreducible components, and is therefore not smooth.
\end{example}

\subsection{Dimension 4 and higher} \label{subsec:dim4}
In this section, we will prove the main result of this paper, which is that for $r \geq 3$ and $d \geq 4$, the fixed locus $(Q_{r,d}^\bullet)^T$ satisfies Murphy's law. To prove this, we use Mn\"ev-Sturmfels universality.
\begin{theorem}[Mn\"ev-Sturmfels universality]
The disjoint union of all incidence schemes of rank 2 incidence structures satisfies Murphy's law.
\end{theorem}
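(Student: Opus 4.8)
The plan is to re-derive the classical universality of realization spaces via von Staudt's arithmetic constructions, and then reconcile it with the nonstandard formulation used here. Since Murphy's law is a statement about singularity types, and since a smooth morphism sending $p$ to $q$ identifies the singularity types of $(X,p)$ and $(Y,q)$, it suffices to show: for every finite-type scheme $X$ over $\ZZ$ and every point $x \in X$, there is a rank $2$ incidence structure $\S$ and a point $c \in C_\S$ such that $(C_\S, c)$ lies in the same singularity type as $(X, x)$. Because the question is local, I would first replace $X$ by an affine chart and write $X = \Spec\left(\ZZ[y_1, \ldots, y_n]/(f_1, \ldots, f_m)\right)$.

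The heart of the argument is that incidences among points and lines in $\PP^2$ can encode polynomial arithmetic. Fix a line $\ell \subseteq \PP^2$ together with three marked points playing the roles of $0$, $1$, $\infty$, which endow $\ell$ with a coordinate. The classical von Staudt addition and multiplication gadgets are small configurations of auxiliary points and lines such that, whenever points of $\ell$ carry values $a$ and $b$, the gadget forces a further marked point of $\ell$ to carry the value $a+b$, respectively $ab$. Concatenating these gadgets, for each polynomial $f_i$ I would build a subconfiguration whose realization forces a designated point of $\ell$ to carry the value $f_i(y_1, \ldots, y_n)$, and then impose the incidence that identifies this point with $0$. Taking $\S$ to be the union of all these gadgets over $i = 1, \ldots, m$, the locus in $C_\S$ on which the frame is nondegenerate and the auxiliary construction points sit in their expected positions is cut out precisely by $f_1 = \cdots = f_m = 0$, while the remaining coordinates of $C_\S$ (the projective frame and the freely chosen auxiliary data) contribute only factors of affine space and tori, i.e.\ smooth factors. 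This exhibits an open $U \subseteq C_\S$ and a smooth morphism $U \to X$ sending a chosen point $c$ to $x$, so $(C_\S, c)$ and $(X, x)$ share a singularity type.

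The main obstacle, and the reason I would lean on the modern references rather than Mn\"ev's original real-semialgebraic statement, is making every step work scheme-theoretically and in families over $\ZZ$: one must check that the von Staudt constructions produce the equations $f_i$ with the correct (possibly nonreduced) scheme structure, not merely the right real points, and that ``nondegenerate frame and general-position auxiliary points'' defines the right open subscheme on which the smooth equivalence is valid. This is exactly what the formulations of Lafforgue and Lee supply. Finally I would reconcile the present definition of incidence scheme --- which omits the usual non-incidence conditions and does not pass to a $\PGL(3)$-quotient --- with theirs: dropping the non-incidence conditions only enlarges $C_\S$ by strata that do not meet the relevant open $U$, and not quotienting by $\PGL(3)$ replaces $X$ by a $\PGL(3)$-bundle over it, which is again a smooth modification and hence preserves the singularity type. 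Thus every singularity type of a finite-type $\ZZ$-scheme appears on some $C_\S$, and Murphy's law holds for their disjoint union.
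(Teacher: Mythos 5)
Your proposal is correct and, in the part that actually needs to be supplied here, takes the same route as the paper: both ultimately defer the scheme-theoretic universality over $\ZZ$ to Lafforgue and Lee--Vakil, and both reconcile the present definition of $C_\S$ (no non-incidence conditions, no fixed frame or $\PGL(3)$-quotient) with theirs by observing that the frame-rigidified, non-degenerate configurations form an open locus of $C_\S$ that is a $\PGL(3)$-torsor over the cited realization space, so singularity types are preserved. The von Staudt gadget sketch you include is a faithful outline of what those references prove, which the paper simply treats as a black box.
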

Proofs of similar statements are given in e.g.~\cite{lafforgue2003chirurgie} and \cite{lee2013mnevsturmfels}. For completeness, here we will show that the result from Lee and Vakil \cite{lee2013mnevsturmfels} implies the formulation given in this paper.

Let $\S = (P_1, P_2, I)$ be a rank 2 incidence structure such that $\{1,2,3,4\} \subseteq P_1$, for every pair of elements $j_1, j_2 \in P_2$ there is an $i \in P_1$ such that $(i, j_1)$ and $(i, j_2)$ are in $I$, and for every $j \in P_2$ there are at least three elements $i \in P_1$ with $(i, j) \in I$. For such an incidence structure, the incidence scheme as defined by Lee and Vakil is the locally closed subscheme
\[
C_\S' \subseteq (\PP^2)^{P_1} \times (\PP^{2\vee})^{P_2}
\]
parametrizing points $(p_i)_{i \in P_1}$ and lines $(\ell_j)_{j \in P_2}$ in $\PP^2$ satisfying the following properties:
\begin{enumerate}
\item $p_1 = (0:0:1), p_2 = (0:1:0), p_3 = (1:0:0), p_4 = (1:1:1)$.
\item For $i \in P_1$ and $j \in P_2$ we have $p_i \in \ell_j$ if and only if $(i, j) \in I$.
\item For $i, j \in P_1$ with $i \neq j$ we have $p_i \neq p_j$ and similarly for $i, j \in P_2$ with $i \neq j$ we have $\ell_i \neq \ell_j$.
\end{enumerate}
It is proven that the disjoint union of all these moduli spaces satisfies Murphy's law.

Note that there is an embedding $C_\S' \times \PGL(3) \to C_\S$ which acts on the moduli functor by sending
\[
((p_i)_{i \in P_1}, (\ell_j)_{j \in P_2}, g) \mapsto ((gp_i)_{i \in P_1}, (g\ell_j)_{j \in P_2}).
\]
It can be seen that this is actually an open embedding, so any singularity type occuring on $C_\S'$ also occurs on $C_\S$. It follows that $C_\S$ satisfies Murphy's law.

We need to slightly extend this result, to also allow for incidence schemes in $\PP^{r-1}$ for higher values in $r$.

\begin{corollary} \label{cor:mnevsturmfels}
Let $r \geq 3$ be an integer. The disjoint union of all incidence schemes of rank $r-1$ incidence structures satisfies Murphy's law.
\end{corollary}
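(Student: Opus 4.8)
The plan is to reduce everything to the Mn\"ev--Sturmfels universality theorem just established. For $r = 3$ there is nothing to prove, since a rank $r-1 = 2$ incidence structure is literally a rank $2$ one. So I would assume $r \geq 4$, and aim to show that every singularity type occurring on some rank $2$ incidence scheme $C_\S$ also occurs on some rank $r-1$ incidence scheme. Combined with the universality theorem---which asserts that all singularity types of finite-type $\ZZ$-schemes already occur on the rank $2$ schemes---this yields Murphy's law for the disjoint union of rank $r-1$ incidence schemes.

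Given a rank $2$ incidence structure $\S = (P_1, P_2, I)$, I would construct a rank $r-1$ incidence structure $\S'$ by adjoining a single new element $w$ of degree $d(w) = 3$ (which is allowed precisely because $r \geq 4$), parametrizing a copy of $\PP^2$ inside $\PP^{r-1}$, and declaring every point and every line of $\S$ to be incident to $w$. Concretely, set $P_1' = P_1$, $P_2' = P_2$, $P_3' = \{w\}$, $P_i' = \emptyset$ for $i \geq 4$, and
\[
I' = I \cup \set{(i, w)}{i \in P_1} \cup \set{(j, w)}{j \in P_2}.
\]
Then $C_{\S'}$ parametrizes a choice of $E_w \cong \PP^2$ in $\PP^{r-1}$ together with points $(E_i)_{i \in P_1}$ and lines $(E_j)_{j \in P_2}$, all contained in $E_w$ and satisfying the incidence relations $I$.

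Next I would analyze the forgetful morphism $\pi \colon C_{\S'} \to \Gr(3, r)$ remembering only $E_w$. Over the standard affine big cell $U \subseteq \Gr(3, r)$---where the rank-$3$ subbundle $\ker(q_w) \subseteq \O^r$ maps isomorphically, via $\ker(q_w) \hookrightarrow \O^r \twoheadrightarrow \O^r/\O^{r-3} \cong \O^3$, onto a fixed coordinate quotient---the subbundle $\ker(q_w)$ acquires a canonical trivialization $\ker(q_w) \cong \O_U^3$. Using this trivialization, specifying points and lines inside $E_w$ with incidences $I$ is the same datum as specifying a point of $C_\S$ (the incidence scheme of points and lines in $\PP(\O^3) = \PP^2$), so that $\pi^{-1}(U) \cong C_\S \times U$ as schemes. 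Establishing this identification on the level of functors of points is the only genuine computation in the argument, and is the step I expect to be the main (if routine) obstacle. Since $U$ is open in the smooth $\ZZ$-scheme $\Gr(3,r)$, the projection $C_\S \times U \to C_\S$ is smooth and surjective.

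Finally I would conclude as follows. The open immersion $\pi^{-1}(U) \hookrightarrow C_{\S'}$ is smooth, and under the identification $\pi^{-1}(U) \cong C_\S \times U$ the projection to $C_\S$ is smooth and surjective; chaining these two smooth morphisms through the equivalence relation $\sim$ shows that for every $q \in C_\S$ and every $u \in U$ the pointed scheme $(C_{\S'}, (q,u))$ is equivalent to $(C_\S, q)$. Hence every singularity type appearing on $C_\S$ appears on $C_{\S'}$, and since all $C_\S$ and $C_{\S'}$ are of finite type over $\ZZ$, the Mn\"ev--Sturmfels theorem then gives that the disjoint union of all rank $r-1$ incidence schemes satisfies Murphy's law.
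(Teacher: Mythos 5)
Your proposal is correct and follows essentially the same route as the paper: reduce to $r\geq 4$, adjoin a single degree-$3$ element incident to everything, and identify the preimage of the big cell $U\subseteq\Gr(3,r)$ under the forgetful map with $C_\S\times U$, so that every singularity type on $C_\S$ reappears on $C_{\S'}$. The only (immaterial) difference is that you phrase the trivialization over $U$ in terms of the subbundle $\ker(q_w)$, whereas the paper uses the splitting $\O_S^r\cong\F\oplus\O_S^3$ of the quotient; the functor-of-points verification you flag as the main computation is exactly the step the paper carries out.
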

\begin{proof}
Suppose $r \geq 4$, since the case $r = 3$ is handled by the previous Theorem. Let $\S = (P_1, P_2, I)$ be any rank 2 incidence structure. Define $\S' = (P_1, P_2, \ldots, P_{r-1}, I')$ to be the rank $r-1$ incidence structure defined in the following way: For $P_1$ and $P_2$ it uses the same sets as $\S$, furthermore it has $P_3 = \{s\}$, and $P_k = \emptyset$ for $4 \leq k \leq r-1$. Its incidence relations are defined by
\[
I' = I \cup \set{(p, s)}{p \in P_1 \cup P_2}.
\]
We will show that every singularity type occurring on $\S$ also occurs on $\S'$.

Note that there is a projection $C_{\S'} \to \Gr(3, r)$ which only remembers the subspace of $\PP^{r-1}$ corresponding to $s \in P_3$. Consider the standard affine open $U \subseteq \Gr(3,r)$ parametrizing quotients $q \colon \O_S^r \to \F$ such that the composition
\[
\O_S^{r-3} \to \O_S^r \to \F
\]
is an isomorphism, where the first map is the standard embedding into the first $r-3$ coordinates. Note that in this case we also have an isomorphism
\[
\O_S^r \to \F \oplus \O_S^3,
\]
where the first component of this map is given by $q$, and the second component is given by the projection on the last 3 coordinates of $\O_S^3$.

Now consider some point in $(C_\S \times U)(S)$ for some $S \in \Sch_\ZZ$, which is given by quotients $q_i \colon \O_S^3 \to \F_i$ for $i \in P_1 \cup P_2$, and one more quotient $q \colon \O_S^r \to \F$ which induces an isomorphism $\O_S^{r-3} \to \F$. We construct a point in $C_{\S'}$ by assigning to every $i \in P_1 \cup P_2$ the quotient
\[
q_i'\colon \O_S^r \cong \F \oplus \O_S^3 \xrightarrow{\id_\F \oplus q_i} \F \oplus \F_i.
\]
Furthermore, to $s \in P_3$ we assign the quotient $q_s = q$. Note that indeed the kernel of each $q_i'$ for $i \in P_1 \cup P_2$ is contained in $\ker q_s = \ker q$. It can be checked that this assignment gives an isomorphism
\[
C_\S \times U \to C_\S' \times_{\Gr(3,r)} U.
\]
In particular, this means that every singularity type which occurs on $C_\S$ occurs somewhere on this open subscheme of $C_\S'$.
\end{proof}

From this, the main result follows in a fairly straightforward manner. Here we essentially use that any graph can be embedded in $\RR^{d-1}$ for $d$ at least 4, while this is not possible for $d = 3$.

\begin{theorem}
Let $r, d$ be a positive integers with $d \geq 4$. Every rank $r-1$ incidence structure is of the form $\S_\chi$ for some $\chi \in \X_{r, d}^\bullet$. In particular, if $r \geq 3$, the scheme $(Q_{r,d}^\bullet)^T$ satisfies Murphy's law.
\end{theorem}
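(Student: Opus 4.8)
The plan is to reduce the first claim to a purely topological statement via \cref{theorem:char_chi}, and then feed the resulting identification of components into the Mn\"ev--Sturmfels universality result \cref{cor:mnevsturmfels}. By \cref{theorem:char_chi}, a rank $r-1$ incidence structure $\S$ is equivalent to some $\S_\chi$ with $\chi \in \X_{r,d}^\bullet$ if and only if $\S$ is equivalent to an incidence structure whose underlying $(r-1)$-partite graph is the intersection graph of nonempty, path-connected, compact subsets of $\RR^{d-1}$. Since $d \geq 4$ gives $d-1 \geq 3$ and any realization inside $\RR^3 \subseteq \RR^{d-1}$ already suffices, it is enough to show that \emph{every} finite graph --- in particular the $(r-1)$-partite graph underlying $\S$ --- arises as such an intersection graph in $\RR^3$. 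This is exactly where dimension $4$ becomes available and dimension $3$ fails: in $\RR^2$ only string graphs occur (see \cref{subsec:dim3}), whereas $\RR^3$ leaves enough room to embed arbitrary graphs.

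First I would realize the graph $G = (V,E)$ as a topological $1$-complex in $\RR^3$. Choose distinct points $p_v \in \RR^3$ for $v \in V$ in general position, and for each edge $e = \{v,w\}$ join $p_v$ to $p_w$ by an embedded arc $\gamma_e$, chosen so that the arcs are pairwise disjoint except at shared endpoints. Such a choice is possible because each arc is one-dimensional while a finite union of the previously chosen arcs has measure zero in $\RR^3$, so a generic small perturbation of a straight segment avoids all of them away from the endpoints. Next, for each edge pick an interior midpoint $m_e$ and let $\gamma_e^{\,v}$ be the sub-arc of $\gamma_e$ from $p_v$ to $m_e$; for each vertex set
\[
K_v \;=\; \bigcup_{e \ni v} \gamma_e^{\,v}.
\]
Each $K_v$ is compact and path-connected (it is a star centred at $p_v$), and I would check that $K_v \cap K_w \neq \emptyset$ precisely when $\{v,w\} \in E$: if $v$ and $w$ are adjacent the common midpoint $m_{\{v,w\}}$ lies in both, while if two half-edges from distinct vertices meet, they lie on arcs sharing a point, hence on arcs sharing a vertex, which by the truncation to half-edges forces that shared vertex to equal both $v$ and $w$ --- impossible. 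In particular vertices in the same part of the partition, being non-adjacent, yield disjoint sets. Thus the intersection graph of $\{K_v\}_{v \in V}$ is exactly $G$, and \cref{theorem:char_chi} produces some $\chi \in \X_{r,d}^\bullet$ with $\S_\chi$ equivalent to $\S$.

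Finally I would deduce Murphy's law. By \cref{lemma:Q_is_C} we have $Q_\chi \cong C_{\S_\chi} \cong C_\S$ for the corresponding incidence scheme, so as $\S$ ranges over all rank $r-1$ incidence structures the schemes $Q_\chi$ realize, up to isomorphism, every incidence scheme $C_\S$. Since $(Q_{r,d}^\bullet)^T = \coprod_{\chi \in \X_{r,d}^\bullet} Q_\chi$, every singularity type occurring on some $C_\S$ also occurs on $(Q_{r,d}^\bullet)^T$. For $r \geq 3$, \cref{cor:mnevsturmfels} states that the disjoint union of all rank $r-1$ incidence schemes already satisfies Murphy's law, and hence so does $(Q_{r,d}^\bullet)^T$.

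The main obstacle is the geometric realization step: producing, for an arbitrary graph, connected compact sets in $\RR^3$ whose intersection pattern is \emph{exactly} the edge set, with no spurious incidences among non-adjacent vertices. The embedding of the $1$-complex is routine, but the care lies in the half-edge truncation and in verifying that two distinct half-edges can meet only at a genuine shared midpoint; this is what rules out accidental intersections and what cleanly separates the regime $d \geq 4$ from the string-graph regime $d = 3$.
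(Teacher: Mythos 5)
Your proposal is correct and follows essentially the same route as the paper: both reduce to realizing an arbitrary finite graph as the intersection graph of connected compact subsets of $\RR^{d-1}$ via the star-of-half-edges construction (the paper uses straight segments to edge midpoints with a ``no four points coplanar'' genericity condition, while you use generic embedded arcs, which is the same idea), and then both conclude by combining \cref{theorem:char_chi} with \cref{cor:mnevsturmfels}.
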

\begin{proof}
To prove this theorem, it is sufficient to show that every (finite) undirected $(r-1)$-partite graph is an intersection graph of nonempty, path-connected, compact subsets of $\RR^{d-1}$. Then the result immediately follows by \cref{theorem:char_chi} and \cref{cor:mnevsturmfels}.

Let $G = (V, E)$ be an undirected graph. Assign to every $v \in V$ a point $P_v \in \RR^{d-1}$, such that no 4 of these points are coplanar. For $v, w \in V$, denote by $s_{v,w}$ the closed line segment between $P_v$ and the midpoint of $P_v$ and $P_w$. In this way, we have $P_v \in s_{v,w}$ and $s_{v,w} \cap s_{w,v} \neq \emptyset$. Assign to every $v \in V$ the set
\[
K_v = \{P_v\} \cup \bigcup_{\{v, w\} \in E} s_{v,w} \subseteq \RR^{d-1}.
\]
The sets $K_v$ are easily checked to have the desired intersection graph.
\end{proof}

\bibliographystyle{alpha}
\bibliography{references}

\end{document}